\newcommand\mj{\mbox{\bf 1}}
\newcommand\set{\mbox{\emph{Set}}}
\newcommand\rel{\mbox{\emph{Rel}}}
\newcommand\seto{\set_\omega}
\newcommand\relo{\rel_\omega}
\newcommand\cat{\mbox{\emph{Cat}}}
\newcommand\catp{\cat_+}
\newcommand\catt{\cat_\times}
\newcommand\cirk{\,{\raisebox{.3ex}{\tiny $\circ$}}\,}
 \newtheorem{thm}{Theorem}[section]
 \newtheorem{prop}[thm]{Proposition}
 \newtheorem{lem}[thm]{Lemma}
 \newtheorem{exm}[thm]{Example}
\numberwithin{equation}{section}
\newcommand\pl{\!+\!}
\newcommand\mn{\!-\!}
\begin{document}

\title{Representing Conjunctive Deductions\\ by Disjunctive Deductions}
\author{Kosta Do\v sen and Zoran Petri\'c}
\affil{Mathematical Institute SANU,\\ Knez Mihailova 36, p.f.\
367,\\ 11001 Belgrade, Serbia\\
\texttt{kosta@mi.sanu.ac.rs}, \texttt{zpetric@mi.sanu.ac.rs} }

\date{}
\maketitle

\vspace{-3ex}

\begin{abstract}
\noindent A skeleton of the category with finite coproducts
$\mathcal{D}$ freely generated by a single object has a
subcategory isomorphic to a skeleton of the category with finite
products $\mathcal{C}$ freely generated by a countable set of
objects. As a consequence, we obtain that $\mathcal{D}$ has a
subcategory equivalent with $\mathcal{C}$. From a
proof-theoretical point of view, this means that up to some
identifications of formulae the deductions of pure conjunctive
logic with a countable set of propositional letters can be
represented by deductions in pure disjunctive logic with just one
propositional letter. By taking opposite categories, one can
replace coproduct by product, i.e. disjunction by conjunction, and
the other way round, to obtain the dual results.

\end{abstract}

\vspace{.3cm}

\noindent {\small {\it Mathematics Subject Classification} ({\it
2010}): ({\it
2010}): 03F03 (Proof theory, general), 03F07 (Structure of proofs), 03G30 (Categorial logic, topoi), 18A15 (Foundations, relations to logic and deductive systems) }

\vspace{.5ex}

\noindent {\small {\it Keywords$\,$}: conjunction, disjunction, deductions, categories with finite products and coproducts, Brauerian representation, exponential functor, contravariant power-set functor}

\vspace{.5ex}

\noindent {\small {\it Acknowledgements$\,$}: This work was
supported by project ON174026 of the Ministry of Education,
Science, and Technological Development of the Republic of Serbia.
}

\section{Introduction}

In general proof theory, as conceived by \cite{P71}, one addresses the question ``What is a proof?'', or ``What is a deduction?''---a deduction being a hypothetical proof, i.e.\ a proof with hypotheses---by dealing with questions related to normal forms for deductions, and in particular with the question of identity criteria for deductions. This theory deals with the structure of deductions, which in one way can be shown with the help of the typed lambda calculus in the Curry-Howard correspondence, and not with their strength measured by ordinals, which is what one finds in proof theory that arose out of Hilbert's programme.

Much of general proof theory is the field of categorial (or categorical) proof theory. Fundamental notions of category theory like the notion of adjoint functor, and very important structures like cartesian closed categories, came to be of central concern for logic in that field. Through results of categorial proof theory called coherence results, which provide a model theory for equality of deductions, logic finds new ties with geometry, topology and algebra (see the books \cite{D99}, \cite{DP04} and \cite{DP07}, the more recent introductory survey \cite{D14}, and references therein).

In general proof theory, and in particular in categorial proof theory, one deals with an algebra of deductions, and for that, one concentrates on the operations of this algebra, which come with the rules of inference. As an equational theory, the algebra of deductions involves the question of identity criteria for deductions, the central question of general proof theory. (This question may be found, at least implicitly, in Hilbert's 24th problem; see \cite{D15}.)

In categorial proof theory one usually studies a freely generated category of a certain kind equationally presented. This freely generated category is constructed out of syntactical material, as in universal algebra one constructs a freely generated algebra of a certain kind equationally presented by factoring through an equivalence relation on terms. In categories we have partial algebras---the arrow terms out of which the equivalence classes are built have types, their sources and targets---but there is no significant mathematical difference in the construction when compared with what one has in universal algebra without types (see \cite{DP04}, Chapter~2, in particular in Section 2.3). The objects of this freely constructed categories are propositions, i.e.\ formulae, and the arrows, i.e.\ the equivalence classes of arrow terms, are deductions, i.e.\ equivalence classes of particular derivations, whose sources are premises and whose targets are conclusions. (Our dealing only with deductions with not more than one premise will not limit the mathematical importance of the results we consider, because once conjunction becomes available, a finite number of premises can always be replaced by their conjunction, and zero premises can be replaced by the propositional constant $\top$.) For deductions we have the partial operation of composition and identity deductions (this is essential for them; see \cite{D16}). The categories in question are interesting if they are not preorders, i.e., not all arrows with the same source and the same target are equal. Otherwise, the proof theory is trivial: any deductions with the same premises and the same conclusions become equal.

We deal with pure disjunctive logic with the alphabet including
just the symbol $\vee$ for the connective of disjunction, the propositional constant $\bot$ and a single propositional letter. The proof
theory of this fragment, according to Prawitz's equivalence of
derivations in its beta-eta version (see \cite{P71}) and Lawvere's
and Lambek's ideas about identity of deductions (see \cite{Law69} and \cite{L72}), is to be
identified with the category with finite, possibly empty, coproducts freely
generated by a single object. In this proof theory we have only hypothetical proofs, i.e.\ we have no proofs without hypotheses. We also consider pure
conjunctive logic based on the alphabet including just the symbol
$\wedge$ for the connective of conjunction, the propositional constant $\top$ and a countably infinite set of propositional letters.
The proof theory of that fragment is to be identified with the
category with finite, possibly empty, products freely generated by a countable set
of objects.

What one thinks of immediately when one has to connect conjunction
with disjunction is to put everything upside down, i.e.\ dualize.
The question whether without dualizing disjunction can
imitate conjunction, or the other way round, is however different, and the
answer to this question is not straightforward. The goal of this
paper is to give an answer to it in the context of categorial proof
theory. Our result that, up to some
identifications of formulae, the deductions of pure conjunctive
logic with a countable set of propositional letters can be
represented by deductions in pure disjunctive logic with just one
propositional letter, without dualizing, is rather unexpected.

To achieve this result, the propositional letters of conjunctive logic are coded by disjunctive formulae with a single propositional letter. So propositional variables are not translated by propositional variables.

We will show that there is an embedding, i.e.\ faithful functor,
which is one-one on objects from a category equivalent to the
later category to a category equivalent to the former category.
The main ingredient of our proof is the faithfulness of a
structure-preserving functor from a free cartesian category to the
category $\set$. \v Cubri\' c proved in \cite{Cu98} that there
exists a faithful structure-preserving functor from a free
cartesian closed category to the category \set. Our result may be
taken as related to that previous result, but it cannot be inferred from it.

In our proofs we rely on coherence results and results involving the representation theory of algebras called Brauerian algebras we have obtained before, for which we give references at appropriate places in the text. The results about representations of Brauerian algebras are however generalized. All these results enabled us to get new results about faithful embeddings, mentioned in the preceding paragraph, in a simple manner. We think that without proceeding in a manner such as ours, by relying on our previous coherence and representation results, what is achieved in this paper would be quite difficult to reach.

\section{The category $\seto$ and its coproducts}
Let $\omega$ be the set of finite ordinals $0,1,\ldots ,n,\ldots$,
i.e.\  $\emptyset, \{0\}, \ldots, \{0,\ldots,n-1\},\ldots$, and
let $\seto$ be the full subcategory of $\set$ whose set of objects
is $\omega$. This category is a strict monoidal category with
finite coproducts freely generated by the single object 1. This
can be demonstrated as follows.

The empty coproduct in $\seto$ is the object 0 and the binary
coproduct on objects is given by addition. The binary coproduct on
arrows is given by putting ``side by side'', i.e.,\ the coproduct
of $f\!:n\rightarrow m$ and $f'\!:n'\rightarrow m'$ is given by the function
$g\!:n+n'\rightarrow m+m'$ such that
\[
g(i)=\left\{
\begin{array}{cl}
f(i) & \mbox{if}\hspace{1em} 0\leq i\leq n-1,
\\[1ex]
m+f'(i-n) & \mbox{if}\hspace{1em} n\leq i\leq n+n'-1.
\end{array}
\right .
\]
For example, if $f\!:2\vdash 3$ and $f'\!:3\vdash 2$ are
given, respectively, by the following two pictures (representing functions going downwards, with sources, i.e.\ domains, above, and targets, i.e.\ codomains, below)
\begin{center}
\begin{picture}(140,50)
\put(0,15){\circle*{2}} \put(20,15){\circle*{2}}
\put(40,15){\circle*{2}}

\put(0,10){\makebox(0,0)[t]{\scriptsize $0$}}
\put(20,10){\makebox(0,0)[t]{\scriptsize $1$}}
\put(40,10){\makebox(0,0)[t]{\scriptsize $2$}}

\put(0,35){\circle*{2}} \put(20,35){\circle*{2}}

\put(0,40){\makebox(0,0)[b]{\scriptsize $0$}}
\put(20,40){\makebox(0,0)[b]{\scriptsize $1$}}

\put(0,35){\line(2,-1){40}} \put(20,35){\line(0,-1){20}}

\put(120,15){\circle*{2}} \put(140,15){\circle*{2}}

\put(120,10){\makebox(0,0)[t]{\scriptsize $0$}}
\put(140,10){\makebox(0,0)[t]{\scriptsize $1$}}

\put(100,35){\circle*{2}} \put(120,35){\circle*{2}}
\put(140,35){\circle*{2}}

\put(100,40){\makebox(0,0)[b]{\scriptsize $0$}}
\put(120,40){\makebox(0,0)[b]{\scriptsize $1$}}
\put(140,40){\makebox(0,0)[b]{\scriptsize $2$}}

\put(100,35){\line(1,-1){20}} \put(120,35){\line(0,-1){20}}
\put(140,35){\line(0,-1){20}}

\end{picture}
\end{center}
then $f+ f'\!:5\rightarrow 5$ is given by
\begin{center}
\begin{picture}(80,50)
\put(0,15){\circle*{2}} \put(20,15){\circle*{2}}
\put(40,15){\circle*{2}}

\put(0,10){\makebox(0,0)[t]{\scriptsize $0$}}
\put(20,10){\makebox(0,0)[t]{\scriptsize $1$}}
\put(40,10){\makebox(0,0)[t]{\scriptsize $2$}}

\put(0,35){\circle*{2}} \put(20,35){\circle*{2}}

\put(0,40){\makebox(0,0)[b]{\scriptsize $0$}}
\put(20,40){\makebox(0,0)[b]{\scriptsize $1$}}

\put(0,35){\line(2,-1){40}} \put(20,35){\line(0,-1){20}}

\put(60,15){\circle*{2}} \put(80,15){\circle*{2}}

\put(60,10){\makebox(0,0)[t]{\scriptsize $3$}}
\put(80,10){\makebox(0,0)[t]{\scriptsize $4$}}

\put(40,35){\circle*{2}} \put(60,35){\circle*{2}}
\put(80,35){\circle*{2}}

\put(40,40){\makebox(0,0)[b]{\scriptsize $2$}}
\put(60,40){\makebox(0,0)[b]{\scriptsize $3$}}
\put(80,40){\makebox(0,0)[b]{\scriptsize $4$}}

\put(40,35){\line(1,-1){20}} \put(60,35){\line(0,-1){20}}
\put(80,35){\line(0,-1){20}}

\end{picture}
\end{center}
It is clear that $(\seto,+,0)$ is a strict monoidal category (see
\cite[Section~VII.1]{ML71}). This category is skeletal in the
sense of \cite[Section~IV.4]{ML71}, i.e.\ any two objects
isomorphic in it are identical. So this category is a skeleton of
itself, where a \emph{skeleton} of a category $\mathcal{K}$ is any
full subcategory $\mathcal{A}$ of $\mathcal{K}$ such that each
object of $\mathcal{K}$ is isomorphic in $\mathcal{K}$ to exactly
one object of $\mathcal{A}$. (The category $(\seto,+,0)$ is a PROP
in the sense of \cite[Chapter~V]{ML65}.)

The unique arrow $\check{\kappa}\!:0\rightarrow n$ is the empty function.
The first injection $\check{k}^1\!:n\rightarrow n+m$ and the second
injection $\check{k}^2\!:m\rightarrow n+m$ are as expected:

\begin{center}
\begin{picture}(230,50)
\put(0,15){\circle*{2}} \put(30,15){\circle*{2}}
\put(50,15){\circle*{2}} \put(80,15){\circle*{2}}

\put(0,5){\makebox(0,0)[b]{\scriptsize $0$}}
\put(30,5){\makebox(0,0)[b]{\scriptsize $n\mn 1$}}
\put(50,5){\makebox(0,0)[b]{\scriptsize $n$}}
\put(80,5){\makebox(0,0)[b]{\scriptsize $n\pl m\mn 1$}}
\put(15,15){\makebox(0,0){\scriptsize $\ldots$}}
\put(65,15){\makebox(0,0){\scriptsize $\ldots$}}

\put(0,35){\circle*{2}} \put(30,35){\circle*{2}}

\put(0,40){\makebox(0,0)[b]{\scriptsize $0$}}
\put(30,40){\makebox(0,0)[b]{\scriptsize $n\mn 1$}}
\put(15,35){\makebox(0,0){\scriptsize $\ldots$}}

\put(0,35){\line(0,-1){20}} \put(30,35){\line(0,-1){20}}

\put(150,15){\circle*{2}} \put(180,15){\circle*{2}}
\put(200,15){\circle*{2}} \put(230,15){\circle*{2}}

\put(150,5){\makebox(0,0)[b]{\scriptsize $0$}}
\put(180,5){\makebox(0,0)[b]{\scriptsize $n\mn 1$}}
\put(200,5){\makebox(0,0)[b]{\scriptsize $n$}}
\put(230,5){\makebox(0,0)[b]{\scriptsize $n\pl m\mn 1$}}
\put(165,15){\makebox(0,0){\scriptsize $\ldots$}}
\put(215,15){\makebox(0,0){\scriptsize $\ldots$}}

\put(200,35){\circle*{2}} \put(230,35){\circle*{2}}

\put(200,40){\makebox(0,0)[b]{\scriptsize $0$}}
\put(230,40){\makebox(0,0)[b]{\scriptsize $m\mn 1$}}
\put(215,35){\makebox(0,0){\scriptsize $\ldots$}}

\put(200,35){\line(0,-1){20}} \put(230,35){\line(0,-1){20}}

\end{picture}
\end{center}
The codiagonal arrow $\check{w}\!:n+n\rightarrow n$ is given by
\begin{center}
\begin{picture}(80,50)
\put(25,15){\circle*{2}} \put(55,15){\circle*{2}}

\put(25,5){\makebox(0,0)[b]{\scriptsize $0$}}
\put(55,5){\makebox(0,0)[b]{\scriptsize $n\mn 1$}}
\put(40,15){\makebox(0,0){\scriptsize $\ldots$}}

\put(0,35){\circle*{2}} \put(30,35){\circle*{2}}
\put(50,35){\circle*{2}} \put(80,35){\circle*{2}}

\put(0,40){\makebox(0,0)[b]{\scriptsize $0$}}
\put(30,40){\makebox(0,0)[b]{\scriptsize $n\mn 1$}}
\put(50,40){\makebox(0,0)[b]{\scriptsize $n$}}
\put(80,40){\makebox(0,0)[b]{\scriptsize $2n\mn 1$}}
\put(15,35){\makebox(0,0){\scriptsize $\ldots$}}
\put(65,35){\makebox(0,0){\scriptsize $\ldots$}}

\put(0,35){\line(5,-4){25}} \put(30,35){\line(5,-4){25}}
\put(50,35){\line(-5,-4){25}} \put(80,35){\line(-5,-4){25}}
\end{picture}
\end{center}
Note that, for complete precision, we would have to label
$\check{k}^1$ and $\check{k}^2$ above by $n$ and $m$; analogously
we would have to label $\check{\kappa}$ and $\check{w}$ by $n$. We
will however omit these labels, since they can be reconstructed
from the contexts.

For arrows $f\!:n\rightarrow p$ and $g\!:m\rightarrow p$, the unique arrow
$[f,g]\!:n+m\rightarrow p$, which makes the following diagram commute
\[
\begin{tikzcd}
n \arrow{rd}[swap]{f} \arrow{r}{\check{k}^1} &n+m
\arrow[dotted]{d}{[f,g]}
\arrow[leftarrow]{r}{\check{k}^2} &m \arrow{ld}{g} \\
\phantom{.}&p &\phantom{.}
\end{tikzcd}
\]
is the composition $\check{w}\cirk(f+g)$.

Let $\catp$ be the subcategory of $\cat$ whose objects are the
categories with finite, strict monoidal, coproducts and whose
arrows are functors preserving this structure ``on the nose''. We
have just shown that $\seto$ is an object of $\catp$. Let
$\mathcal{D}$ (where D comes for disjunction) be a category with
finite, strict monoidal, coproducts freely generated by the set
$\{p\}$ of objects. This category is the image of $\{p\}$ under
the left adjoint of the forgetful functor from $\catp$ to $\set$
that maps a category to the set of its objects. One can construct
the category $\mathcal{D}$ out of syntactic material, but we will
not go into this construction here (see~\cite[Chapter~2]{DP04}).

Since $\mathcal{D}$ is freely generated by $\{p\}$, there is a
unique arrow $F\!:\mathcal{D}\rightarrow \seto$ of $\catp$, which extends
the function from $\{p\}$ to $\omega$ mapping $p$ to 1. The
following proposition stems from \cite{K72a} (see p.\ 129, where a
related dual result is announced), \cite{M80} (Theorem 2.2),
\cite{TS96} (Theorem 8.2.3, p.\ 207), \cite{P02} (Section~7)
and~\cite{DP01}.

\begin{prop}
The functor $F$ is an isomorphism.
\end{prop}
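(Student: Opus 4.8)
The plan is to show that $F\colon\mathcal{D}\to\seto$ is a bijection on objects and full and faithful. Since $\mathcal{D}$ is freely generated by $\{p\}$, every object of $\mathcal{D}$ is (by the freeness, i.e. by the normal-form theorem for objects in a free category with finite strict monoidal coproducts on one generator) a finite iterated coproduct of copies of $p$; because the coproduct is strict monoidal, such an object is literally $p+(p+(\cdots))$ with some fixed bracketing-and-unit convention, and there are exactly countably many of them, one for each $n\in\omega$, with the empty coproduct $0$ corresponding to $0\in\omega$. The functor $F$ sends the $n$-fold coproduct of $p$ to $n$, so $F$ is a bijection on objects. This part is essentially the universal property unwound, so I would state it briefly rather than belabour it.

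For fullness, I would argue that every arrow of $\seto$ lies in the image of $F$. An arrow $g\colon n\to m$ in $\seto$ is just a function, and any function between finite coproducts of $1$ is built from the coproduct structure: concretely, writing $n$ as the $n$-fold coproduct of $1$, the arrow $g$ equals $[\,\check{k}^{g(0)},\ldots,\check{k}^{g(n-1)}\,]$, a copairing of injections (where each $\check{k}^{g(i)}\colon 1\to m$ picks out the element $g(i)$, itself an iterate of $\check{k}^1,\check{k}^2$ and $\check\kappa$). Since all of these generating arrows — injections, codiagonals, copairings — are images under $F$ of the corresponding canonical arrows of $\mathcal{D}$ (because $F$ preserves the finite-strict-monoidal-coproduct structure on the nose), $g$ is $F$ of the corresponding composite in $\mathcal{D}$. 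So $F$ is full.

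The real content is faithfulness, and this is where I expect the main obstacle. Take $\xi,\eta\colon A\to B$ in $\mathcal{D}$ with $F\xi=F\eta$; we must show $\xi=\eta$. The strategy is a coherence argument: reduce every arrow of $\mathcal{D}$ to a normal form and check that $F$ separates distinct normal forms. Using the universal property, $A$ is an $n$-fold and $B$ an $m$-fold coproduct of $p$; by the universal property of the coproduct $A=p+\cdots+p$, the arrow $\xi$ is determined by its $n$ components $\xi\cirk\check{k}^i\colon p\to B$, and similarly for $\eta$, so it suffices to treat the case $A=p$. Now an arrow $p\to B$ in the free category must, after normalization, be one of the $m$ coproduct injections $p\hookrightarrow B$ (there is no other way to produce a map out of the generator $p$ into a coproduct of copies of $p$, since there are no arrows $p\to 0$ and the only arrows $p\to p$ equal the identity); this is the crucial normal-form lemma. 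Then $F$ sends the $i$-th such injection to the function $1\to m$ with value $i$, and these $m$ functions are pairwise distinct, so $F\xi=F\eta$ forces $\xi=\eta$. The delicate point — the "hard part" — is justifying that normalization: that in the \emph{free} category with finite strict monoidal coproducts on one generator, every arrow $p\to B$ really does reduce to a single injection, with no residual identifications collapsing distinct injections and no "extra" arrows surviving. This is exactly the coherence statement for such categories, and I would either invoke it from the cited sources (\cite{K72a}, \cite{M80}, \cite{TS96}, \cite{P02}, \cite{DP01}) or prove it by a standard rewriting argument on arrow terms: orient the defining equations so that copairings $[f,g]$ get pushed past injections (using $[f,g]\cirk\check{k}^1=f$) and codiagonals get absorbed, show the rewriting terminates and is confluent, and read off that the normal forms of arrows $p\to B$ are precisely the $m$ injections.
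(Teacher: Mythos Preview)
Your proposal is correct and follows essentially the same route as the paper: bijection on objects is immediate, fullness is shown by expressing an arbitrary function $n\to m$ as a copairing of injections (the paper does this by induction on $n$, which unfolds to exactly your formula $[\check{k}^{g(0)},\ldots,\check{k}^{g(n-1)}]$), and faithfulness is the coherence theorem. The only difference is that where you sketch the normal-form/rewriting argument for faithfulness, the paper simply invokes the dual of Cartesian Coherence from \cite[\S9.2]{DP04} without further comment.
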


\begin{proof} It is trivial to check that $F$ is one-one and onto on objects. The faithfulness of
$F$ is a result just dual to Cartesian Coherence of
\cite[\S9.2]{DP04}. It remains to show that $F$ is full, and since
$F$ is one-one on objects, it is enough to verify that an
arbitrary function $f\!:n\rightarrow m$ can be built in terms of
identities, composition, empty functions, injections and brackets
[,]. We proceed by induction on $n\geq 0$.

If $n=0$, then $f=\check{\kappa}$, the empty function. If $n=m=1$,
then $f$ is the identity. Suppose $n=1$ and $m>1$. If $f(0)=0$,
then $f=\check{k}^1$, and if $f(0)=m\mn 1$, then $f=\check{k}^2$.
If $f(0)=i$ for $0<i<m\mn 1$, then $f=\check{k}^2\cirk
\check{k}^1$, for $\check{k}^1\!:1\rightarrow m\mn i$ and
$\check{k}^2\!:m\mn i\rightarrow m$.

If $n=i+1$, for $i\geq 1$, then $f=[f_1,f_2]$ for $f_1\!:i\rightarrow m$
and $f_2\!:1\rightarrow m$, and we just apply the induction hypothesis to
$f_1$ and $f_2$.
\end{proof}

\section{Products in $\seto$}
Besides the structure given by finite coproducts, the category
$\seto$ has the structure given by finite products. The empty
product in $\seto$ is the object 1 and the binary product on
objects is given by multiplication. As in the case of coproducts,
we will omit the labels of all the arrows since they can be
reconstructed from the contexts.

Let $\iota\!:n\times m\rightarrow n\cdot m$ be the bijection defined by
$\iota(i,j)=i\cdot m+j$. The inverse of this function is defined
by $\iota^{-1}(i)=(\lfloor i/m\rfloor,i\,\mbox{\rm mod}\,m)$,
where $\lfloor i/m\rfloor$ is the quotient and $i\,\mbox{\rm
mod}\,m$ is the remainder for the division of $i$ by $m$. In
other words, $\iota^{-1}(i)$ is the element at the $i$-th place
(starting from 0) in the lexicographically ordered set $n\times
m$. For example, if $n=2$ and $m=3$, then $\iota^{-1}(1)=(0,1)$
and $\iota^{-1}(5)=(1,2)$.

The product $f_1\cdot f_2\!:n_1\cdot n_2\rightarrow m_1\cdot m_2$ of
arrows $f_1\!:n_1\rightarrow m_1$ and $f_2\!:n_2\rightarrow m_2$ is defined as
$\iota\cirk (f_1\times f_2)\cirk \iota^{-1}$, where $\times$ is
the standard product in $\set$, i.e.,
\[
(f_1\times f_2)(i_1,i_2)=(f_1(i_1),f_2(i_2)).
\]
For example, if $f_1\!:2\rightarrow 2$ and $f_2\!:3\rightarrow 2$ are given, respectively,
by
\begin{center}
\begin{picture}(140,50)
\put(0,15){\circle*{2}} \put(20,15){\circle*{2}}

\put(0,5){\makebox(0,0)[b]{\scriptsize $0$}}
\put(20,5){\makebox(0,0)[b]{\scriptsize $1$}}

\put(0,35){\circle*{2}} \put(20,35){\circle*{2}}

\put(0,40){\makebox(0,0)[b]{\scriptsize $0$}}
\put(20,40){\makebox(0,0)[b]{\scriptsize $1$}}

\put(0,35){\line(0,-1){20}} \put(20,35){\line(-1,-1){20}}

\put(100,15){\circle*{2}} \put(120,15){\circle*{2}}

\put(100,5){\makebox(0,0)[b]{\scriptsize $0$}}
\put(120,5){\makebox(0,0)[b]{\scriptsize $1$}}

\put(100,35){\circle*{2}} \put(120,35){\circle*{2}}
\put(140,35){\circle*{2}}

\put(100,40){\makebox(0,0)[b]{\scriptsize $0$}}
\put(120,40){\makebox(0,0)[b]{\scriptsize $1$}}
\put(140,40){\makebox(0,0)[b]{\scriptsize $2$}}

\put(100,35){\line(1,-1){20}} \put(120,35){\line(-1,-1){20}}
\put(140,35){\line(-2,-1){40}}

\end{picture}
\end{center}
then $f_1\cdot f_2\!:6\rightarrow 4$ and $f_2\cdot f_1\!:6\rightarrow 4$ are
given, respectively, by
\begin{center}
\begin{picture}(300,70)
\put(40,25){\circle*{2}} \put(60,25){\circle*{2}}
\put(80,25){\circle*{2}} \put(100,25){\circle*{2}}

\put(40,15){\makebox(0,0)[b]{\scriptsize $0$}}
\put(60,15){\makebox(0,0)[b]{\scriptsize $1$}}
\put(80,15){\makebox(0,0)[b]{\scriptsize $2$}}
\put(100,15){\makebox(0,0)[b]{\scriptsize $3$}}

\put(40,5){\makebox(0,0)[b]{\scriptsize $00$}}
\put(60,5){\makebox(0,0)[b]{\scriptsize $01$}}
\put(80,5){\makebox(0,0)[b]{\scriptsize $10$}}
\put(100,5){\makebox(0,0)[b]{\scriptsize $11$}}

\put(0,45){\circle*{2}} \put(20,45){\circle*{2}}
\put(40,45){\circle*{2}} \put(60,45){\circle*{2}}
\put(80,45){\circle*{2}} \put(100,45){\circle*{2}}

\put(0,50){\makebox(0,0)[b]{\scriptsize $0$}}
\put(20,50){\makebox(0,0)[b]{\scriptsize $1$}}
\put(40,50){\makebox(0,0)[b]{\scriptsize $2$}}
\put(60,50){\makebox(0,0)[b]{\scriptsize $3$}}
\put(80,50){\makebox(0,0)[b]{\scriptsize $4$}}
\put(100,50){\makebox(0,0)[b]{\scriptsize $5$}}

\put(0,60){\makebox(0,0)[b]{\scriptsize $00$}}
\put(20,60){\makebox(0,0)[b]{\scriptsize $01$}}
\put(40,60){\makebox(0,0)[b]{\scriptsize $02$}}
\put(60,60){\makebox(0,0)[b]{\scriptsize $10$}}
\put(80,60){\makebox(0,0)[b]{\scriptsize $11$}}
\put(100,60){\makebox(0,0)[b]{\scriptsize $12$}}

\put(0,45){\line(3,-1){60}} \put(20,45){\line(1,-1){20}}
\put(40,45){\line(0,-1){20}} \put(60,45){\line(0,-1){20}}
\put(80,45){\line(-2,-1){40}} \put(100,45){\line(-3,-1){60}}

\put(240,25){\circle*{2}} \put(260,25){\circle*{2}}
\put(280,25){\circle*{2}} \put(300,25){\circle*{2}}

\put(240,15){\makebox(0,0)[b]{\scriptsize $0$}}
\put(260,15){\makebox(0,0)[b]{\scriptsize $1$}}
\put(280,15){\makebox(0,0)[b]{\scriptsize $2$}}
\put(300,15){\makebox(0,0)[b]{\scriptsize $3$}}

\put(240,5){\makebox(0,0)[b]{\scriptsize $00$}}
\put(260,5){\makebox(0,0)[b]{\scriptsize $01$}}
\put(280,5){\makebox(0,0)[b]{\scriptsize $10$}}
\put(300,5){\makebox(0,0)[b]{\scriptsize $11$}}

\put(200,45){\circle*{2}} \put(220,45){\circle*{2}}
\put(240,45){\circle*{2}} \put(260,45){\circle*{2}}
\put(280,45){\circle*{2}} \put(300,45){\circle*{2}}

\put(200,50){\makebox(0,0)[b]{\scriptsize $0$}}
\put(220,50){\makebox(0,0)[b]{\scriptsize $1$}}
\put(240,50){\makebox(0,0)[b]{\scriptsize $2$}}
\put(260,50){\makebox(0,0)[b]{\scriptsize $3$}}
\put(280,50){\makebox(0,0)[b]{\scriptsize $4$}}
\put(300,50){\makebox(0,0)[b]{\scriptsize $5$}}

\put(200,60){\makebox(0,0)[b]{\scriptsize $00$}}
\put(220,60){\makebox(0,0)[b]{\scriptsize $01$}}
\put(240,60){\makebox(0,0)[b]{\scriptsize $10$}}
\put(260,60){\makebox(0,0)[b]{\scriptsize $11$}}
\put(280,60){\makebox(0,0)[b]{\scriptsize $20$}}
\put(300,60){\makebox(0,0)[b]{\scriptsize $21$}}

\put(200,45){\line(4,-1){80}} \put(220,45){\line(3,-1){60}}
\put(240,45){\line(0,-1){20}} \put(260,45){\line(-1,-1){20}}
\put(280,45){\line(-2,-1){40}} \put(300,45){\line(-3,-1){60}}

\end{picture}
\end{center}
It is easy to verify that the structure on $\seto$ given by
$\cdot$ and 1 is again strict monoidal. Moreover, since $\seto$ is
skeletal, the product commutes on objects; we always have $n\cdot
m=m\cdot n$ but this does not mean, as it is shown above, that we
always have $f_1\cdot f_2=f_2\cdot f_1$.

For every $n\in \omega$ there is a unique function
$\hat{\kappa}\!:n\rightarrow 1$. The first projection $\hat{k}^1\!:n\cdot
m\rightarrow n$ is defined as $\pi_1\cirk \iota^{-1}$, where
$\pi_1\!:n\times m\rightarrow n$ is the ordinary first projection, and
analogously for $\hat{k}^2\!:n\cdot m\rightarrow m$. For example, the
first projection $\hat{k}^1\!:6\rightarrow 2$ is given by
\begin{center}
\begin{picture}(100,60)(0,10)
\put(40,25){\circle*{2}} \put(60,25){\circle*{2}}

\put(40,15){\makebox(0,0)[b]{\scriptsize $0$}}
\put(60,15){\makebox(0,0)[b]{\scriptsize $1$}}

\put(0,45){\circle*{2}} \put(20,45){\circle*{2}}
\put(40,45){\circle*{2}} \put(60,45){\circle*{2}}
\put(80,45){\circle*{2}} \put(100,45){\circle*{2}}

\put(0,50){\makebox(0,0)[b]{\scriptsize $0$}}
\put(20,50){\makebox(0,0)[b]{\scriptsize $1$}}
\put(40,50){\makebox(0,0)[b]{\scriptsize $2$}}
\put(60,50){\makebox(0,0)[b]{\scriptsize $3$}}
\put(80,50){\makebox(0,0)[b]{\scriptsize $4$}}
\put(100,50){\makebox(0,0)[b]{\scriptsize $5$}}

\put(0,60){\makebox(0,0)[b]{\scriptsize $00$}}
\put(20,60){\makebox(0,0)[b]{\scriptsize $01$}}
\put(40,60){\makebox(0,0)[b]{\scriptsize $02$}}
\put(60,60){\makebox(0,0)[b]{\scriptsize $10$}}
\put(80,60){\makebox(0,0)[b]{\scriptsize $11$}}
\put(100,60){\makebox(0,0)[b]{\scriptsize $12$}}

\put(0,45){\line(2,-1){40}} \put(20,45){\line(1,-1){20}}
\put(40,45){\line(0,-1){20}} \put(60,45){\line(0,-1){20}}
\put(80,45){\line(-1,-1){20}} \put(100,45){\line(-2,-1){40}}

\end{picture}
\end{center}
while the second projection $\hat{k}^2\!:6\rightarrow 3$ is given by
\begin{center}
\begin{picture}(100,60)(0,10)
\put(40,25){\circle*{2}} \put(60,25){\circle*{2}}
\put(80,25){\circle*{2}}

\put(40,15){\makebox(0,0)[b]{\scriptsize $0$}}
\put(60,15){\makebox(0,0)[b]{\scriptsize $1$}}
\put(80,15){\makebox(0,0)[b]{\scriptsize $2$}}

\put(0,45){\circle*{2}} \put(20,45){\circle*{2}}
\put(40,45){\circle*{2}} \put(60,45){\circle*{2}}
\put(80,45){\circle*{2}} \put(100,45){\circle*{2}}

\put(0,50){\makebox(0,0)[b]{\scriptsize $0$}}
\put(20,50){\makebox(0,0)[b]{\scriptsize $1$}}
\put(40,50){\makebox(0,0)[b]{\scriptsize $2$}}
\put(60,50){\makebox(0,0)[b]{\scriptsize $3$}}
\put(80,50){\makebox(0,0)[b]{\scriptsize $4$}}
\put(100,50){\makebox(0,0)[b]{\scriptsize $5$}}

\put(0,60){\makebox(0,0)[b]{\scriptsize $00$}}
\put(20,60){\makebox(0,0)[b]{\scriptsize $01$}}
\put(40,60){\makebox(0,0)[b]{\scriptsize $02$}}
\put(60,60){\makebox(0,0)[b]{\scriptsize $10$}}
\put(80,60){\makebox(0,0)[b]{\scriptsize $11$}}
\put(100,60){\makebox(0,0)[b]{\scriptsize $12$}}

\put(0,45){\line(2,-1){40}} \put(20,45){\line(2,-1){40}}
\put(40,45){\line(2,-1){40}} \put(60,45){\line(-1,-1){20}}
\put(80,45){\line(-1,-1){20}} \put(100,45){\line(-1,-1){20}}

\end{picture}
\end{center}
The diagonal arrow $\hat{w}\!:n\rightarrow n\cdot n$ is defined as
$\iota\cirk\Delta$, where $\Delta\!:n\rightarrow n\times n$ is the
ordinary diagonal map. For example, $\hat{w}\!:3\rightarrow 9$ is
given by
\begin{center}
\begin{picture}(160,60)
\put(0,25){\circle*{2}} \put(20,25){\circle*{2}}
\put(40,25){\circle*{2}} \put(60,25){\circle*{2}}
\put(80,25){\circle*{2}} \put(100,25){\circle*{2}}
\put(120,25){\circle*{2}} \put(140,25){\circle*{2}}
\put(160,25){\circle*{2}}

\put(0,15){\makebox(0,0)[b]{\scriptsize $0$}}
\put(20,15){\makebox(0,0)[b]{\scriptsize $1$}}
\put(40,15){\makebox(0,0)[b]{\scriptsize $2$}}
\put(60,15){\makebox(0,0)[b]{\scriptsize $3$}}
\put(80,15){\makebox(0,0)[b]{\scriptsize $4$}}
\put(100,15){\makebox(0,0)[b]{\scriptsize $5$}}
\put(120,15){\makebox(0,0)[b]{\scriptsize $6$}}
\put(140,15){\makebox(0,0)[b]{\scriptsize $7$}}
\put(160,15){\makebox(0,0)[b]{\scriptsize $8$}}

\put(0,5){\makebox(0,0)[b]{\scriptsize $00$}}
\put(20,5){\makebox(0,0)[b]{\scriptsize $01$}}
\put(40,5){\makebox(0,0)[b]{\scriptsize $02$}}
\put(60,5){\makebox(0,0)[b]{\scriptsize $10$}}
\put(80,5){\makebox(0,0)[b]{\scriptsize $11$}}
\put(100,5){\makebox(0,0)[b]{\scriptsize $12$}}
\put(120,5){\makebox(0,0)[b]{\scriptsize $20$}}
\put(140,5){\makebox(0,0)[b]{\scriptsize $21$}}
\put(160,5){\makebox(0,0)[b]{\scriptsize $22$}}

\put(60,45){\circle*{2}} \put(80,45){\circle*{2}}
\put(100,45){\circle*{2}}

\put(60,50){\makebox(0,0)[b]{\scriptsize $0$}}
\put(80,50){\makebox(0,0)[b]{\scriptsize $1$}}
\put(100,50){\makebox(0,0)[b]{\scriptsize $2$}}

\put(60,45){\line(-3,-1){60}} \put(80,45){\line(0,-1){20}}
\put(100,45){\line(3,-1){60}}

\end{picture}
\end{center}

For arrows $f\!:p\rightarrow n$ and $g\!:p\rightarrow m$, the unique arrow
$\langle f,g\rangle\!:p\rightarrow n\cdot m$, which makes the following
diagram commute
\[
\begin{tikzcd}
\phantom{.}& \arrow{ld}[swap]{f} p \arrow[dotted]{d}{\langle
f,g\rangle} \arrow{rd}{g}&\phantom{.}
\\
n  \arrow[leftarrow]{r}[swap]{\hat{k}^1} &n\cdot m
\arrow[rightarrow]{r}[swap]{\hat{k}^2} &m
\end{tikzcd}
\]
is the composition $(f\cdot g)\cirk\hat{w}$.

\section{Mapping $\mathcal{C}$ into $\seto$}
Let $\catt$ be the subcategory of $\cat$ whose objects are the
categories with finite, strict monoidal, products and whose arrows
are functors preserving this structure ``on the nose''. We have
just shown that $\seto$ is an object of $\catt$. Let $\mathcal{C}$
(where C stands for conjunction) be a category with finite strict
monoidal products freely generated by a countable set
$P=\{p_1,p_2,\ldots\}$ of objects. This category is the image of
$P$ under the left adjoint for the forgetful functor from $\catt$
to $\set$ that maps a category to the set of its objects. The
category $\mathcal{C}$ can as $\mathcal{D}$ be constructed out of
syntactic material, but we will not go into this construction here
(see~\cite[Chapter~2]{DP04}). We assume only that the objects of
$\mathcal{C}$ are the finite sequences of elements of $P$.

In the proof below we rely on prime numbers, because when natural numbers greater than 0 are generated with multiplication and 1, the prime numbers are free generators. Disjunctive logic with a single propositional variable corresponds to generating the natural numbers with addition and 0 out of the free generator 1, while conjunctive logic with countably many propositional variables corresponds to building them with multiplication and 1 with the prime numbers as free generators. Using the prime numbers enables us to obtain that the functor mentioned in Proposition \ref{iso} is one-one on objects.

Since $\mathcal{C}$ is freely generated by $P$, there is a unique
arrow ${H\!:\mathcal{C}\rightarrow \seto}$ of $\catt$, which extends the
function from $P$ to $\omega$ mapping $p_n$ to the $n$-th prime
number $\mathbf{p}_n$. Our goal is to prove the following
proposition.

\begin{prop}\label{tvrdjenje1}
The functor $H$ is faithful.
\end{prop}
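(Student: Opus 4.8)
The plan is to reduce the faithfulness of $H$ to a coherence-type result for free categories with finite products, analogous to the way faithfulness of $F$ was reduced to Cartesian Coherence in the proof of Proposition~2.1. The key observation is that $H$ factors through an auxiliary functor into $\set$: the category $\mathcal{C}$ is the free category with finite strict monoidal products on the countable set $P$, so there is a unique $\catt$-functor $G\!:\mathcal{C}\rightarrow\seto$ that sends each generator $p_n$ to the \emph{one-element} object $1$ (this is the content of the dual of Cartesian Coherence: the canonical functor from a free cartesian category to $\set$ is faithful — indeed here it lands in $\seto$ since all homsets between powers of $1$ are finite). Since $\seto\subseteq\set$ and faithfulness is inherited, $G$ is faithful. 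Thus for arrows $u,v\!:A\rightarrow B$ of $\mathcal{C}$ it suffices to show that $Hu=Hv$ in $\seto$ implies $Gu=Gv$ in $\set$.

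First I would make the objects concrete. By the standing assumption the objects of $\mathcal{C}$ are finite sequences $(p_{i_1},\ldots,p_{i_k})$ from $P$, with product given by concatenation and unit the empty sequence. Under $H$ such a sequence is sent to the product $\mathbf{p}_{i_1}\cdot\ldots\cdot\mathbf{p}_{i_k}$ of the corresponding prime numbers, computed in $\seto$ via the iterated bijection $\iota$; under $G$ it is sent to $1\cdot\ldots\cdot 1=1$. The crucial combinatorial fact, and the reason prime numbers were chosen, is unique factorization: the natural number $N=\mathbf{p}_{i_1}\cdots\mathbf{p}_{i_k}$ together with the ordered list of its prime divisors determines the sequence $(p_{i_1},\ldots,p_{i_k})$ uniquely, so $H$ is one-one on objects and moreover the ``coordinates'' of $\seto$-object $HA$ can be canonically re-indexed by the positions $1,\ldots,k$ of the sequence $A$. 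Concretely, I would exhibit, for each object $A$ of $\mathcal{C}$, a canonical bijection between the underlying set $N=HA$ and the product set $\prod_{j=1}^{k}\mathbf{p}_{i_j}$, compatible with the structural arrows.

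The heart of the argument is then to compare the two functors coordinatewise. Because both $H$ and $G$ preserve finite products on the nose, and because in $\set$ (hence in $\seto$) a map into a product is determined by its composites with the projections, it is enough to check that $H$ and $G$ agree on arrows of the form $u\!:A\rightarrow p_n$ — i.e. $A\rightarrow$ a single generator. For such an arrow, $Gu\!:1\rightarrow 1$ carries no information, so what one must show is the converse: if $Hu=Hu'$ as functions $N\rightarrow\mathbf{p}_n$, then $u=u'$ in $\mathcal{C}$. Here I would invoke the coherence / normal-form theory for free cartesian categories (the dual of the development leading to Cartesian Coherence in \cite[\S9.2]{DP04}): every arrow $A\rightarrow p_n$ of $\mathcal{C}$ is equal to a \emph{projection} onto one of the positions of $A$ that is labelled by $p_n$, composed with nothing else, because $p_n$ is a generator and not a product. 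So an arrow $A\rightarrow p_n$ is completely determined by a choice of position $j\in\{1,\ldots,k\}$ with $i_j=n$. It then remains to verify that two distinct such projections are sent by $H$ to distinct functions $N\rightarrow\mathbf{p}_n$; this is a direct computation with $\iota$ and $\iota^{-1}$ using that each prime $\mathbf{p}_n\geq 2$, so the $j$-th ``digit'' in the mixed-radix expansion of an element of $N$ genuinely varies.

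The main obstacle I anticipate is the bookkeeping in this last step: disentangling the iterated $\iota$ on a sequence of \emph{distinct} moduli $\mathbf{p}_{i_1},\ldots,\mathbf{p}_{i_k}$ and showing that the composite of $H$ with the $j$-th structural projection really is the $j$-th mixed-radix digit map, uniformly in $j$ and in $A$. Once that identification is in place — together with the observation that distinct generators $p_m\neq p_n$ already have $HP_m=\mathbf{p}_m\neq\mathbf{p}_n=Hp_n$, so no ``accidental'' equality of arrows into different targets can arise — faithfulness of $H$ follows from faithfulness of $G$ exactly as faithfulness of $F$ followed from dual Cartesian Coherence. I would therefore organize the write-up as: (i) concrete description of objects and the canonical coordinatization of $HA$; (ii) reduction via product-preservation to arrows into a single generator; (iii) the normal-form statement that such arrows are projections; (iv) the $\iota$-computation showing $H$ separates them; (v) conclusion by reduction to the faithful functor $G$.
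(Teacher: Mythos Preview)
Your functor $G$ is not what you think it is. A $\catt$-functor $\mathcal{C}\to\seto$ (with the product structure $(\cdot,1)$ on $\seto$) sending every generator to the terminal object $1$ sends \emph{every} object to $1$ and every arrow to $\mj_1$; it is the constant functor, certainly not faithful. What Cartesian Coherence actually delivers, and what the paper uses as its Lemma~\ref{lema1}, is faithfulness of $G\!:\mathcal{C}\to\seto^{op}$ with $p_n\mapsto 1$: in $\seto^{op}$ the product is $+$, so a length-$k$ sequence goes to $k$, not to $1$, and an arrow becomes an honest function $m\to k$ recording which source coordinate each target coordinate projects to. Your sentence ``faithfulness of $H$ follows from faithfulness of $G$'' is therefore vacuous as you have set things up, and step~(v) of your plan does no work.

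That said, your steps (ii)--(iv) are a correct self-contained argument and do not need your $G$ at all. The normal-form claim in (iii)---every arrow $A\to p_n$ in $\mathcal{C}$ equals a projection onto some position of $A$ labelled $p_n$---is exactly the content of Cartesian Coherence (equivalently, of the concrete description of $\mathcal{C}$ as label-preserving functions between finite $P$-labelled sets, reversed). Granting it, the mixed-radix computation in (iv) does separate distinct projections because each $\mathbf p_n\geq 2$, and (ii) is just the universal property of products together with $H$ being an arrow of $\catt$. So once you excise the spurious $G$ and keep only the appeal to the normal form (which \emph{is} the correct $G$, to $\seto^{op}$), the proof stands.

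This route is more elementary than the paper's. The paper does not analyse $H$ coordinate by coordinate; instead it factors $H$ through the Brauer-style category \emph{Gen} of split equivalences: first the faithful coherence functor $G\!:\mathcal{C}\to\seto^{op}$, then a faithful embedding $J\!:\seto^{op}\to\mbox{\emph{Gen}}$, then a generalized Brauerian representation $F_{\overrightarrow{ab}}$ carrying split equivalences back to functions in $\seto$, with the prime sequence $\overrightarrow{ab}$ read off from the source and target objects. Faithfulness of the composite comes from faithfulness of each factor (Lemmata~\ref{lema1}, \ref{lema2}, \ref{lema3}, \ref{lema4}), and a uniqueness argument (Lemma~\ref{lema5}) identifies the composite with $H$. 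Your approach is shorter and more direct; the paper's buys a uniform explanation via the authors' existing Brauerian machinery from~\cite{DP03a}, which is the methodological point they are making.
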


For this, we rely on Cartesian Coherence of \cite[\S9.2]{DP04} and
Proposition~5 of \cite[\S5]{DP03a}. We start with some auxiliary
notions.

Since $\seto$ is an object of $\catp$, its opposite category
$\seto^{op}$, with $+$ as the product and $0$ as the terminal
object, is an object of $\catt$. Hence, there is a unique arrow
${G\!:\mathcal{C}\rightarrow \seto^{op}}$ of $\catt$, which extends the
function from $P$ to $\omega$ mapping every $p_n$ to $1$.

The following lemma follows from Cartesian Coherence of \cite[\S9.2]{DP04}.

\begin{lem}\label{lema1}
The functor $G$ is faithful.
\end{lem}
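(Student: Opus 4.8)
The plan is to prove that $G\colon\mathcal{C}\to\seto^{op}$ is faithful by transferring faithfulness across the isomorphism of the relevant free categories. Recall that $\mathcal{C}$ is the free category with finite strict monoidal products on the countable set $P$, so $\mathcal{C}^{op}$ is the free category with finite strict monoidal coproducts on $P$. Now $\seto^{op}$ is, by the observation in the excerpt, an object of $\catt$ with $+$ as product and $0$ as terminal object; dually, $\seto$ is an object of $\catp$, and by the Proposition preceding this lemma the canonical functor $F\colon\mathcal{D}\to\seto$ is an \emph{isomorphism}, where $\mathcal{D}$ is the free category with finite strict monoidal coproducts on a \emph{single} generator. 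The functor $G$ corresponds, after dualizing, to a functor $G^{op}\colon\mathcal{C}^{op}\to\seto$ in $\catp$ that sends every generator $p_n$ to $1$.

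First I would make precise the factorization. Since $\mathcal{C}^{op}$ is free with finite coproducts on $P=\{p_1,p_2,\ldots\}$, there is a unique $\catp$-arrow $\Phi\colon\mathcal{C}^{op}\to\mathcal{D}$ sending every $p_n$ to the single generator $p$ of $\mathcal{D}$; and by freeness and uniqueness, $F\circ\Phi=G^{op}$. Since $F$ is an isomorphism, faithfulness of $G^{op}$ is equivalent to faithfulness of $\Phi$, and hence (dualizing back) faithfulness of $G$ is equivalent to the faithfulness of the $\catp^{op}$-style collapsing functor $\mathcal{C}^{op}\to\mathcal{D}$ that identifies all the generators. So the real content is: identifying distinct propositional letters in pure disjunctive logic does not collapse any deductions, i.e.\ the collapsing functor between the free coproduct categories is faithful. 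This is exactly the kind of statement governed by Cartesian Coherence of \cite[\S9.2]{DP04} (in its dual, coproduct, form): there one has a faithful functor from the free category with finite products to $\rel$ or to $\set$, whose value on an arrow records the relevant "graph" of the deduction, and this graph is insensitive to which atom labels the leaves — it depends only on the combinatorial pattern of how premises are matched to conclusions, which is preserved when atoms are identified.

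Concretely, I would invoke the dual of Cartesian Coherence to get a faithful structure-preserving functor $K\colon\mathcal{C}^{op}\to\rel$ (or a suitable Brauerian/relational model), where the value $K(f)$ of a deduction $f$ is determined by its graph together with a decoration of the graph's edges by the atoms of $P$. Now observe that for the single-generator category $\mathcal{D}$ the analogous faithful functor $K'\colon\mathcal{D}\to\rel$ (again the dual of Cartesian Coherence, or simply $F$ followed by the inclusion $\seto\hookrightarrow\set^{op}$, which is faithful) has as its value on a deduction exactly the \emph{underlying graph} — there is only one atom, so no decoration is needed. The functor $\Phi$ forgets the atom-decoration. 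Hence if $\Phi(f)=\Phi(g)$, then $f$ and $g$ have the same underlying graph; but in the free coproduct category over a single object the graph already determines the arrow (that is Cartesian Coherence for one generator, equivalently $F$ being faithful), and conversely two deductions with the same underlying graph between the \emph{same} objects of $\mathcal{C}^{op}$ must carry the same atom decoration, because the decoration of each edge is forced by the source and target objects. Therefore $f=g$, so $\Phi$ is faithful, and thus $G$ is faithful.

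The main obstacle is the last point: one must check that the atom-decoration of the graph of a deduction in $\mathcal{C}^{op}$ is \emph{recoverable} from the graph together with the (labelled) source and target objects, so that no two distinct arrows of $\mathcal{C}^{op}$ with the same $\Phi$-image can exist. This is where the structure of the objects of $\mathcal{C}$ as finite sequences over $P$ is used: each edge of the graph connects a specified leaf of the source to a specified leaf of the target, and in a legitimate deduction of pure disjunctive/conjunctive logic these two leaves must be occurrences of the \emph{same} propositional letter, so the decoration is determined. Once this bookkeeping is in place, the faithfulness of $G$ drops out, and the formal skeleton — dualize, factor through $F$, use that $F$ is an isomorphism, reduce to Cartesian Coherence — is routine. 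I expect the write-up to lean on \cite[\S9.2]{DP04} for the coherence input and on the Proposition above for $F$ being an isomorphism, with the only genuinely new verification being the decoration-recovery argument sketched here.
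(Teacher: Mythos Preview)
Your argument is correct, but it is considerably more elaborate than what the paper does. The paper treats this lemma as an immediate consequence of Cartesian Coherence of \cite[\S9.2]{DP04}: that coherence theorem is precisely the statement that the canonical structure-preserving functor from the free category with finite products on an \emph{arbitrary} generating set to $\seto^{op}$ (sending every generator to $1$) is faithful. So $G$ is faithful by direct citation, with no further work.

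Your route---dualize, factor $G^{op}$ as $F\cirk\Phi$ with $\Phi\!:\mathcal{C}^{op}\to\mathcal{D}$ the generator-collapsing functor, use that $F$ is an isomorphism, and then argue $\Phi$ is faithful via a decoration-recovery argument---amounts to re-deriving the multi-generator case of Cartesian Coherence from its one-generator instance. The decoration-recovery step (that the label on each edge of the graph is forced by the labelled source and target, since an edge can only join occurrences of the \emph{same} letter) is exactly the observation that makes the general coherence result no harder than the one-generator case. This is fine and even illuminating, but note a slight circularity in your write-up: you invoke ``the dual of Cartesian Coherence'' to obtain the faithful $K\!:\mathcal{C}^{op}\to\rel$, yet that is already the faithfulness of $G$ you are trying to prove. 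If you want a self-contained argument along your lines, you should instead take as input only the one-generator coherence (equivalently, the proposition that $F$ is an isomorphism) and then supply the decoration-recovery step yourself; that is a genuine alternative proof. If you are content to cite the general Cartesian Coherence, the factorization through $\mathcal{D}$ is unnecessary.
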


Let \emph{Gen} be a category whose objects are again the finite
ordinals. An arrow of \emph{Gen} from $n$ to $m$ is an equivalence
relation defined on $n + m$, called a \emph{split
equivalence}. (This is any equivalence relation on the ordinal $n + m$, and it is called \emph{split}, because its domain $n + m$ is divided into the source $n$ and the target $m$; for more on these matters see \cite{DP09}.) The identity arrow from $n$ to $n$ is the split
equivalence with $n$ equivalence classes of the form
$\{i,i+n\}$. (We follow here the presentation of \emph{Gen} in \cite{DP03a},
rather than that in \cite{DP09}, which yields an isomorphic category.)

Composition of arrows is defined, roughly speaking, as the
transitive closure of the union of the two relations composed,
where we omit the ordered pairs one of whose members is in the
middle (see \cite[Section~2]{DP03a}, \cite[Section~2]{DP03b} and
\cite[Section~2]{DP09} for a detailed definition). For example,
the split equivalences $R_1$ and $R_2$ given, respectively, by
\begin{center}
\begin{picture}(264,40)

\put(50,10){\circle*{2}} \put(58,10){\circle*{2}}
\put(66,10){\circle*{2}} \put(58,31){\circle*{2}}
\put(66,31){\circle*{2}}

\put(58,29){\line(-1,-2){7.6}} \put(58,12){\line(0,1){17}}
\put(66,12){\line(0,1){17}}

\put(54,12){\oval(8,8)[t]}

\put(50,7){\makebox(0,0)[t]{\scriptsize$0$}}
\put(58,7){\makebox(0,0)[t]{\scriptsize$1$}}
\put(66,7){\makebox(0,0)[t]{\scriptsize$2$}}
\put(58,34){\makebox(0,0)[b]{\scriptsize$0$}}
\put(66,34){\makebox(0,0)[b]{\scriptsize$1$}}

\put(0,20){\makebox(0,0)[l]{$R_1$}}

\put(200,31){\circle*{2}} \put(208,10){\circle*{2}}
\put(216,10){\circle*{2}} \put(208,31){\circle*{2}}
\put(216,31){\circle*{2}}

\put(216,12){\line(-1,2){7.6}} \put(216,12){\line(0,1){17}}
\put(208.5,12){\line(-1,2){8.5}}

\put(212,29){\oval(8,8)[b]}

\put(200,34){\makebox(0,0)[b]{\scriptsize$0$}}
\put(208,7){\makebox(0,0)[t]{\scriptsize$1$}}
\put(216,7){\makebox(0,0)[t]{\scriptsize$2$}}
\put(208,34){\makebox(0,0)[b]{\scriptsize$0$}}
\put(216,34){\makebox(0,0)[b]{\scriptsize$1$}}

\put(150,20){\makebox(0,0)[l]{$R_2$}}

\end{picture}
\end{center}
are composed so as to yield the split equivalence
${R_2\cirk R_1}$ given by the picture on the right-hand side of the equation sign:
\begin{center}
\begin{picture}(164,60)

\put(50,30){\circle*{2}} \put(58,30){\circle*{2}}
\put(66,30){\circle*{2}} \put(58,9){\circle*{2}}
\put(66,9){\circle*{2}} \put(58,51){\circle*{2}}
\put(66,51){\circle*{2}}

\put(58,49){\line(-1,-2){7.6}} \put(58,32){\line(0,1){17}}
\put(66,32){\line(0,1){17}}

\put(54,32){\oval(8,8)[t]}

\put(66,11){\line(-1,2){7.6}} \put(66,11){\line(0,1){17}}
\put(58.5,11){\line(-1,2){8.5}}

\put(62,28){\oval(8,8)[b]}

\put(58,6){\makebox(0,0)[t]{\scriptsize$0$}}
\put(66,6){\makebox(0,0)[t]{\scriptsize$1$}}
\put(58,55){\makebox(0,0)[b]{\scriptsize$0$}}
\put(66,55){\makebox(0,0)[b]{\scriptsize$1$}}
\put(49,32){\makebox(0,0)[br]{\scriptsize$0$}}
\put(59,32){\makebox(0,0)[bl]{\scriptsize$1$}}
\put(67,32){\makebox(0,0)[bl]{\scriptsize$2$}}

\put(-54,30){\makebox(0,0)[l]{$R_2\cirk R_1$}}

\put(100,19){\circle*{2}} \put(108,19){\circle*{2}}
\put(100,41){\circle*{2}} \put(108,41){\circle*{2}}
\put(104,21){\oval(8,8)[t]} \put(104,39){\oval(8,8)[b]}
\put(100.8,23.4){\line(1,2){7}} \put(107.2,23.4){\line(-1,2){7}}
\put(100,21){\line(0,1){18}} \put(108,21){\line(0,1){18}}
\put(100,16){\makebox(0,0)[t]{\scriptsize$0$}}
\put(108,16){\makebox(0,0)[t]{\scriptsize$1$}}
\put(100,44){\makebox(0,0)[b]{\scriptsize$0$}}
\put(108,44){\makebox(0,0)[b]{\scriptsize$1$}}

\put(83,30){\makebox(0,0){$=$}}

\end{picture}
\end{center}

Consider the function, which maps the arrow $f^{op}\!:n\rightarrow m$ of
$\seto^{op}$ to the split equivalence (an arrow of \emph{Gen})
between $n$ and $m$ with $n$ equivalence classes, one for each
$i\in n$, of the form
\[
\{i\}\cup\{j+n\mid j \in m \;{\rm and}\;f(j)=i\}.
\]
(For every value of the function $f$ we put in the same class together with this value all the arguments with that value; besides that we have singleton equivalence classes for elements of the codomain of $f$ that are not in the image of $f$.) For example, $f^{op}\!:3\rightarrow 4$ given by the picture on the
left-hand side is mapped to the split equivalence given by the
picture on the right-hand side:
\begin{center}
\begin{picture}(160,50)

\put(0,15){\circle*{2}} \put(20,15){\circle*{2}}
\put(40,15){\circle*{2}} \put(60,15){\circle*{2}}

\put(20,35){\circle*{2}} \put(40,35){\circle*{2}}
\put(60,35){\circle*{2}}

\put(20,35){\line(-1,-1){20}} \put(20,35){\line(1,-1){20}}
\put(20,35){\line(2,-1){40}} \put(60,35){\line(-2,-1){40}}

\put(0,5){\makebox(0,0)[b]{\scriptsize$0$}}
\put(20,5){\makebox(0,0)[b]{\scriptsize$1$}}
\put(40,5){\makebox(0,0)[b]{\scriptsize$2$}}
\put(60,5){\makebox(0,0)[b]{\scriptsize$3$}}

\put(20,40){\makebox(0,0)[b]{\scriptsize$0$}}
\put(40,40){\makebox(0,0)[b]{\scriptsize$1$}}
\put(60,40){\makebox(0,0)[b]{\scriptsize$2$}}

\put(100,15){\circle*{2}} \put(120,15){\circle*{2}}
\put(140,15){\circle*{2}} \put(160,15){\circle*{2}}

\put(120,35){\circle*{2}} \put(140,35){\circle*{2}}
\put(160,35){\circle*{2}}

\put(120,35){\line(-1,-1){20}} \put(120,35){\line(1,-1){20}}
\put(120,35){\line(2,-1){40}} \put(160,35){\line(-2,-1){40}}

\put(100,5){\makebox(0,0)[b]{\scriptsize$0$}}
\put(120,5){\makebox(0,0)[b]{\scriptsize$1$}}
\put(140,5){\makebox(0,0)[b]{\scriptsize$2$}}
\put(160,5){\makebox(0,0)[b]{\scriptsize$3$}}

\put(120,40){\makebox(0,0)[b]{\scriptsize$0$}}
\put(140,40){\makebox(0,0)[b]{\scriptsize$1$}}
\put(160,40){\makebox(0,0)[b]{\scriptsize$2$}}

\qbezier(100,15)(120,25)(140,15) \qbezier(100,15)(130,30)(160,15)
\qbezier(140,15)(150,20)(160,15)

\end{picture}
\end{center}
It is not difficult to check that this is the arrow function of a
faithful functor $J\!:\seto^{op}\rightarrow \mbox{\emph{Gen}}$, which is
identity on objects (see \cite[end of Section~2]{DP09}).

Let $R\!:n\rightarrow m$ be an arrow of \emph{Gen}, and
$\overrightarrow{ab}$ the sequence $a_0\ldots a_{n-1} b_0 \ldots
b_{m-1}$ of not necessarily distinct finite ordinals greater
than or equal to 2. We define a relation (not a split equivalence)
$F_{\overrightarrow{ab}}(R)$ between the ordinal $a_0\cdot\ldots\cdot a_{n-1}$
(which is 1 when $n=0$) and the ordinal $b_0\cdot\ldots\cdot b_{m-1}$ in the
following way.

For $\overrightarrow{d}$ being the sequence $d_0\ldots d_{k-1}$, with $k\geq 0$, let
$\iota_{\overrightarrow{d}}\!:d_0\times\ldots\times d_{k-1}\rightarrow
d_0\cdot\ldots\cdot d_{k-1}$ be a function defined by
\[
\iota_{\overrightarrow{d}}(i_0,\ldots,i_{k-1})=i_0\cdot d_1
\cdot\ldots\cdot d_{k-1}+\ldots+i_{k-2}\cdot d_{n-1}+i_{k-1}.
\]
Its inverse $\iota_{\overrightarrow{d}}^{-1}\!:
d_0\cdot\ldots\cdot d_{k-1}\rightarrow d_0\times\ldots\times d_{k-1}$ is
defined by
\[
\iota_{\overrightarrow{d}}^{-1}(i)=(\lfloor i/(d_1\cdot\ldots\cdot
d_{k-1})\rfloor,\ldots, \lfloor i/ d_{k-1}\rfloor \,\mbox{\rm
mod}\, d_{k-2}, i\,\mbox{\rm mod}\,d_{k-1}).
\]
Note that $\iota$ and $\iota^{-1}$ defined at the beginning of
Section~3 are just $\iota_{\overrightarrow{d}}$ and
$\iota_{\overrightarrow{d}}^{-1}$ for $\overrightarrow{d}$ being
the two-element sequence $nm$. The bijection $\iota_{\overrightarrow{d}}$ assigns to an element of $d_0\times\ldots\times d_{k-1}$ its position in the lexicographical order.

Let $F_{\overrightarrow{ab}}(R)$ be defined as the set of all ordered pairs
$(i,j)$ in $(a_0\cdot\ldots\cdot a_{n-1})\times (b_0\cdot\ldots\cdot
b_{m-1})$ such that for $\pi_x$ the $x$-th projection, $\pi_y$ the $y$-th
projection and $\overrightarrow{c}$ the $n\pl m$-tuple obtained by
concatenating the $n$-tuple $\iota_{\overrightarrow{a}}^{-1}(i)$ with the
$m$-tuple $\iota_{\overrightarrow{b}}^{-1}(j)$ we have
\[
(\forall x,y\in n\pl m)\; ((x,y)\in R\Rightarrow
\pi_x(\overrightarrow{c})=\pi_y(\overrightarrow{c})).
\]

Roughly speaking, the idea is to connect by $F_{\overrightarrow{ab}}(R)$ an element of
$a_0\times\ldots\times a_{n-1}$ with an element of
$b_0\times\ldots\times b_{m-1}$ when this pair ``matches'' $R$.
For example, if $R\!:3\rightarrow 4$ is given by
\begin{center}
\begin{picture}(60,50)

\put(0,15){\circle*{2}} \put(20,15){\circle*{2}}
\put(40,15){\circle*{2}} \put(60,15){\circle*{2}}

\put(20,35){\circle*{2}} \put(40,35){\circle*{2}}
\put(60,35){\circle*{2}}

\put(20,35){\line(-1,-1){20}} \put(20,35){\line(1,-1){20}}
\put(20,35){\line(2,-1){40}} \put(60,35){\line(-2,-1){40}}

\put(0,5){\makebox(0,0)[b]{\scriptsize$0$}}
\put(20,5){\makebox(0,0)[b]{\scriptsize$1$}}
\put(40,5){\makebox(0,0)[b]{\scriptsize$2$}}
\put(60,5){\makebox(0,0)[b]{\scriptsize$3$}}

\put(20,40){\makebox(0,0)[b]{\scriptsize$0$}}
\put(40,40){\makebox(0,0)[b]{\scriptsize$1$}}
\put(60,40){\makebox(0,0)[b]{\scriptsize$2$}}

\qbezier(0,15)(20,25)(40,15) \qbezier(0,15)(30,30)(60,15)
\qbezier(40,15)(50,20)(60,15)

\end{picture}
\end{center}
and $a_0=3$, $a_1=a_2=b_0=\ldots=b_3=2$, then $2$ from $3\cdot
2^2$, which corresponds to the triple $(0,1,0)$, is connected to $0$ from $2^4$, which corresponds to the
quadruple $(0,0,0,0)$, and the pair $((0,1,0),(0,0,0,0))$ matches $R$, since we have the picture
\begin{center}
\begin{picture}(60,50)

\put(0,15){\circle*{2}} \put(20,15){\circle*{2}}
\put(40,15){\circle*{2}} \put(60,15){\circle*{2}}

\put(20,35){\circle*{2}} \put(40,35){\circle*{2}}
\put(60,35){\circle*{2}}

\put(20,35){\line(-1,-1){20}} \put(20,35){\line(1,-1){20}}
\put(20,35){\line(2,-1){40}} \put(60,35){\line(-2,-1){40}}

\put(-10,4){\makebox(0,0)[b]{\scriptsize$($}}
\put(0,5){\makebox(0,0)[b]{\scriptsize$0$}}
\put(10,3){\makebox(0,0)[b]{\scriptsize$,$}}
\put(20,5){\makebox(0,0)[b]{\scriptsize$0$}}
\put(30,3){\makebox(0,0)[b]{\scriptsize$,$}}
\put(40,5){\makebox(0,0)[b]{\scriptsize$0$}}
\put(50,3){\makebox(0,0)[b]{\scriptsize$,$}}
\put(60,5){\makebox(0,0)[b]{\scriptsize$0$}}
\put(70,4){\makebox(0,0)[b]{\scriptsize$)$}}

\put(10,39){\makebox(0,0)[b]{\scriptsize$($}}
\put(20,40){\makebox(0,0)[b]{\scriptsize$0$}}
\put(30,38){\makebox(0,0)[b]{\scriptsize$,$}}
\put(40,40){\makebox(0,0)[b]{\scriptsize$1$}}
\put(50,38){\makebox(0,0)[b]{\scriptsize$,$}}
\put(60,40){\makebox(0,0)[b]{\scriptsize$0$}}
\put(70,39){\makebox(0,0)[b]{\scriptsize$)$}}

\qbezier(0,15)(20,25)(40,15) \qbezier(0,15)(30,30)(60,15)
\qbezier(40,15)(50,20)(60,15)

\end{picture}
\end{center}
which is as the picture given for $R$ with every element of an equivalence class of $R$ replaced by the same number. In the same manner, we conclude that $0,1,3,4,5,6$ and $7$ from
$3\cdot 2^2$ are connected, respectively, to
$0,4,4,11,15,11$ and $15$ from $2^4$, and that there are no other pairs
corresponding to the elements of $F_{\overrightarrow{ab}}(R)$. Hence, in this case $F_{\overrightarrow{ab}}(R)$ is not
a function.

For $R\!:n\rightarrow m$ an arrow of \emph{Gen} we say that a sequence of
finite ordinals $a_0\ldots a_{n-1} b_0 \ldots b_{m-1}$ is
\emph{appropriate} for $R$ when $(i,j)\in R$ and $i,j<n$ implies $a_i=a_j$, $(i,j)\in R$ and $i<n$ and $j\geq n$ implies $a_i=a_j$, and $(i,j)\in R$ and $i,j\geq n$ implies $b_i=b_j$. The following lemma, with a straightforward proof,
provides sufficient conditions for $F_{\overrightarrow{ab}}(R)$ to be a
function.

\begin{lem}\label{lema0}
For $R=J(f^{op})$ and $\overrightarrow{ab}$ a sequence appropriate
for $R$, we have that $F_{\overrightarrow{ab}}(R)$ is a function.
\end{lem}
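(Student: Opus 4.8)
The plan is to show that $F_{\overrightarrow{ab}}(R)$ is total and single-valued as a relation from $a_0\cdot\ldots\cdot a_{n-1}$ to $b_0\cdot\ldots\cdot b_{m-1}$. First I would unwind the hypotheses. Since $R=J(f^{op})$ for an arrow $f^{op}\!:n\rightarrow m$ of $\seto^{op}$, i.e.\ for a function $f\!:m\rightarrow n$ of $\set$, the definition of $J$ gives that $R$ has precisely the $n$ equivalence classes $\{i\}\cup\{j\pl n\mid j\in m,\;f(j)=i\}$, one for each $i\in n$. The feature of $R$ that makes the argument work is that each such class contains exactly one element below $n$, namely its index $i$, while for $j\in m$ the element $j\pl n$ lies in the class indexed by $f(j)$. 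Given this, the requirement that $\overrightarrow{ab}$ be appropriate for $R$ reduces to the single demand that $b_j=a_{f(j)}$ for every $j\in m$; the clause of appropriateness about two target elements in a common class then follows, and the clause about two source elements in a common class is vacuous.

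Next I would fix an element $e$ of $a_0\cdot\ldots\cdot a_{n-1}$ and set $(c_0,\ldots,c_{n-1})=\iota_{\overrightarrow{a}}^{-1}(e)$. For an element $e'$ of $b_0\cdot\ldots\cdot b_{m-1}$, writing $(c_n,\ldots,c_{n+m-1})=\iota_{\overrightarrow{b}}^{-1}(e')$ and letting $\overrightarrow{c}$ be the concatenation of these two tuples, the condition defining $(e,e')\in F_{\overrightarrow{ab}}(R)$ says exactly that $\overrightarrow{c}$ has equal entries at $R$-related positions, i.e.\ that $\overrightarrow{c}$ is constant on each class of $R$. By the description of $R$ above, this holds if and only if $c_{j\pl n}=c_{f(j)}$ for all $j\in m$. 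Since $f(j)<n$, these equations determine the target block $(c_n,\ldots,c_{n+m-1})$ uniquely in terms of the already chosen source block $(c_0,\ldots,c_{n-1})$, so there is at most one $e'$ related to $e$, which is single-valuedness.

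It remains to check, for totality, that the block of entries forced by the equations $c_{j\pl n}=c_{f(j)}$ is a genuine element of $b_0\times\ldots\times b_{m-1}$; this is the one place where the appropriateness hypothesis is used, since for each $j\in m$ we have $c_{j\pl n}=c_{f(j)}<a_{f(j)}=b_j$. Hence $e':=\iota_{\overrightarrow{b}}(c_n,\ldots,c_{n+m-1})$ is well defined and satisfies $(e,e')\in F_{\overrightarrow{ab}}(R)$, so together with the single-valuedness established above $F_{\overrightarrow{ab}}(R)$ is a function. I do not expect a real obstacle: the only thing to get right is that restricting to split equivalences of the form $J(f^{op})$ --- equivalently, those each of whose classes meets the source in exactly one point --- is precisely what collapses the ``constant on $R$-classes'' condition to the explicit system $c_{j\pl n}=c_{f(j)}$, and makes appropriateness exactly the condition guaranteeing that this system is solved by a tuple lying in $b_0\times\ldots\times b_{m-1}$.
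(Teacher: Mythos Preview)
Your argument is correct and is exactly the straightforward verification the paper has in mind; the paper itself gives no proof beyond the remark that the lemma is ``with a straightforward proof''. Your identification of the key structural fact---that each $R$-class meets $\{0,\ldots,n\mn 1\}$ in a single point, so the defining condition collapses to $c_{j\pl n}=c_{f(j)}$ and appropriateness becomes $b_j=a_{f(j)}$---is precisely what makes the verification routine.
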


\begin{exm}\label{pr1}
{\em Let $f^{op}\!:2+1\rightarrow 2$ be the first projection in
$\seto^{op}$ given by

\begin{center}
\begin{picture}(80,40)
\put(0,10){\circle*{2}} \put(20,10){\circle*{2}}
\put(0,30){\circle*{2}} \put(20,30){\circle*{2}}
\put(40,30){\circle*{2}}

\put(0,10){\line(0,1){20}} \put(20,10){\line(0,1){20}}

\put(0,0){\makebox(0,0)[b]{\scriptsize$0$}}
\put(20,0){\makebox(0,0)[b]{\scriptsize$1$}}
\put(0,35){\makebox(0,0)[b]{\scriptsize$0$}}
\put(20,35){\makebox(0,0)[b]{\scriptsize$1$}}
\put(40,35){\makebox(0,0)[b]{\scriptsize$2$}}
\end{picture}
\end{center}

For $R=J(f^{op})$ and an appropriate sequence
$\overrightarrow{ab}=2\;3\;2\;\;2\;3$ we have that $F_{\overrightarrow{ab}}(R)$ is
given by

\begin{center}
\begin{picture}(220,60)
\put(60,20){\circle*{2}} \put(80,20){\circle*{2}}
\put(100,20){\circle*{2}} \put(120,20){\circle*{2}}
\put(140,20){\circle*{2}} \put(160,20){\circle*{2}}

\put(0,40){\circle*{2}} \put(20,40){\circle*{2}}
\put(40,40){\circle*{2}} \put(60,40){\circle*{2}}
\put(80,40){\circle*{2}} \put(100,40){\circle*{2}}
\put(120,40){\circle*{2}} \put(140,40){\circle*{2}}
\put(160,40){\circle*{2}} \put(180,40){\circle*{2}}
\put(200,40){\circle*{2}} \put(220,40){\circle*{2}}

\put(60,20){\line(-3,1){60}} \put(60,20){\line(-2,1){40}}
\put(80,20){\line(-2,1){40}} \put(80,20){\line(-1,1){20}}
\put(100,20){\line(-1,1){20}} \put(100,20){\line(0,1){20}}

\put(160,20){\line(3,1){60}} \put(160,20){\line(2,1){40}}
\put(140,20){\line(2,1){40}} \put(140,20){\line(1,1){20}}
\put(120,20){\line(1,1){20}} \put(120,20){\line(0,1){20}}

\put(60,10){\makebox(0,0)[b]{\scriptsize$0$}}
\put(80,10){\makebox(0,0)[b]{\scriptsize$1$}}
\put(100,10){\makebox(0,0)[b]{\scriptsize$2$}}
\put(120,10){\makebox(0,0)[b]{\scriptsize$3$}}
\put(140,10){\makebox(0,0)[b]{\scriptsize$4$}}
\put(160,10){\makebox(0,0)[b]{\scriptsize$5$}}

\put(60,2){\makebox(0,0)[b]{\tiny$00$}}
\put(80,2){\makebox(0,0)[b]{\tiny$01$}}
\put(100,2){\makebox(0,0)[b]{\tiny$02$}}
\put(120,2){\makebox(0,0)[b]{\tiny$10$}}
\put(140,2){\makebox(0,0)[b]{\tiny$11$}}
\put(160,2){\makebox(0,0)[b]{\tiny$12$}}

\put(0,45){\makebox(0,0)[b]{\scriptsize$0$}}
\put(20,45){\makebox(0,0)[b]{\scriptsize$1$}}
\put(40,45){\makebox(0,0)[b]{\scriptsize$2$}}
\put(60,45){\makebox(0,0)[b]{\scriptsize$3$}}
\put(80,45){\makebox(0,0)[b]{\scriptsize$4$}}
\put(100,45){\makebox(0,0)[b]{\scriptsize$5$}}
\put(120,45){\makebox(0,0)[b]{\scriptsize$6$}}
\put(140,45){\makebox(0,0)[b]{\scriptsize$7$}}
\put(160,45){\makebox(0,0)[b]{\scriptsize$8$}}
\put(180,45){\makebox(0,0)[b]{\scriptsize$9$}}
\put(200,45){\makebox(0,0)[b]{\scriptsize$10$}}
\put(220,45){\makebox(0,0)[b]{\scriptsize$11$}}

\put(0,53){\makebox(0,0)[b]{\tiny$000$}}
\put(20,53){\makebox(0,0)[b]{\tiny$001$}}
\put(40,53){\makebox(0,0)[b]{\tiny$010$}}
\put(60,53){\makebox(0,0)[b]{\tiny$011$}}
\put(80,53){\makebox(0,0)[b]{\tiny$020$}}
\put(100,53){\makebox(0,0)[b]{\tiny$021$}}
\put(120,53){\makebox(0,0)[b]{\tiny$100$}}
\put(140,53){\makebox(0,0)[b]{\tiny$101$}}
\put(160,53){\makebox(0,0)[b]{\tiny$110$}}
\put(180,53){\makebox(0,0)[b]{\tiny$111$}}
\put(200,53){\makebox(0,0)[b]{\tiny$120$}}
\put(220,53){\makebox(0,0)[b]{\tiny$121$}}
\end{picture}
\end{center}
which for the isomorphism
\[
\iota^{-1}\!:(2\cdot3)\cdot 2\rightarrow (2\cdot3)\times 2
\]
is equal to the composition $\pi_1\cirk\iota^{-1}$ given by
\begin{center}
\begin{picture}(220,60)
\put(60,20){\circle*{2}} \put(80,20){\circle*{2}}
\put(100,20){\circle*{2}} \put(120,20){\circle*{2}}
\put(140,20){\circle*{2}} \put(160,20){\circle*{2}}

\put(0,40){\circle*{2}} \put(20,40){\circle*{2}}
\put(40,40){\circle*{2}} \put(60,40){\circle*{2}}
\put(80,40){\circle*{2}} \put(100,40){\circle*{2}}
\put(120,40){\circle*{2}} \put(140,40){\circle*{2}}
\put(160,40){\circle*{2}} \put(180,40){\circle*{2}}
\put(200,40){\circle*{2}} \put(220,40){\circle*{2}}

\put(60,20){\line(-3,1){60}} \put(60,20){\line(-2,1){40}}
\put(80,20){\line(-2,1){40}} \put(80,20){\line(-1,1){20}}
\put(100,20){\line(-1,1){20}} \put(100,20){\line(0,1){20}}

\put(160,20){\line(3,1){60}} \put(160,20){\line(2,1){40}}
\put(140,20){\line(2,1){40}} \put(140,20){\line(1,1){20}}
\put(120,20){\line(1,1){20}} \put(120,20){\line(0,1){20}}

\put(60,10){\makebox(0,0)[b]{\scriptsize$0$}}
\put(80,10){\makebox(0,0)[b]{\scriptsize$1$}}
\put(100,10){\makebox(0,0)[b]{\scriptsize$2$}}
\put(120,10){\makebox(0,0)[b]{\scriptsize$3$}}
\put(140,10){\makebox(0,0)[b]{\scriptsize$4$}}
\put(160,10){\makebox(0,0)[b]{\scriptsize$5$}}

\put(0,45){\makebox(0,0)[b]{\scriptsize$0$}}
\put(20,45){\makebox(0,0)[b]{\scriptsize$1$}}
\put(40,45){\makebox(0,0)[b]{\scriptsize$2$}}
\put(60,45){\makebox(0,0)[b]{\scriptsize$3$}}
\put(80,45){\makebox(0,0)[b]{\scriptsize$4$}}
\put(100,45){\makebox(0,0)[b]{\scriptsize$5$}}
\put(120,45){\makebox(0,0)[b]{\scriptsize$6$}}
\put(140,45){\makebox(0,0)[b]{\scriptsize$7$}}
\put(160,45){\makebox(0,0)[b]{\scriptsize$8$}}
\put(180,45){\makebox(0,0)[b]{\scriptsize$9$}}
\put(200,45){\makebox(0,0)[b]{\scriptsize$10$}}
\put(220,45){\makebox(0,0)[b]{\scriptsize$11$}}

\put(0,53){\makebox(0,0)[b]{\tiny$00$}}
\put(20,53){\makebox(0,0)[b]{\tiny$01$}}
\put(40,53){\makebox(0,0)[b]{\tiny$10$}}
\put(60,53){\makebox(0,0)[b]{\tiny$11$}}
\put(80,53){\makebox(0,0)[b]{\tiny$20$}}
\put(100,53){\makebox(0,0)[b]{\tiny$21$}}
\put(120,53){\makebox(0,0)[b]{\tiny$30$}}
\put(140,53){\makebox(0,0)[b]{\tiny$31$}}
\put(160,53){\makebox(0,0)[b]{\tiny$40$}}
\put(180,53){\makebox(0,0)[b]{\tiny$41$}}
\put(200,53){\makebox(0,0)[b]{\tiny$50$}}
\put(220,53){\makebox(0,0)[b]{\tiny$51$}}
\end{picture}
\end{center}
Hence, this is the first projection from $(2\cdot 3)\cdot 2$ to
$2\cdot3$ in $\seto$.}
\end{exm}

\begin{exm}\label{pr2}
{\em Let $f^{op}\!:2\rightarrow 2+2$ be the diagonal arrow in
$\seto^{op}$ given by

\begin{center}
\begin{picture}(80,40)
\put(0,10){\circle*{2}} \put(20,10){\circle*{2}}
\put(40,10){\circle*{2}} \put(60,10){\circle*{2}}
\put(20,30){\circle*{2}} \put(40,30){\circle*{2}}

\put(20,30){\line(-1,-1){20}} \put(40,30){\line(-1,-1){20}}
\put(20,30){\line(1,-1){20}} \put(40,30){\line(1,-1){20}}

\put(0,0){\makebox(0,0)[b]{\scriptsize$0$}}
\put(20,0){\makebox(0,0)[b]{\scriptsize$1$}}
\put(40,0){\makebox(0,0)[b]{\scriptsize$2$}}
\put(60,0){\makebox(0,0)[b]{\scriptsize$3$}}
\put(20,35){\makebox(0,0)[b]{\scriptsize$0$}}
\put(40,35){\makebox(0,0)[b]{\scriptsize$1$}}
\end{picture}
\end{center}

For $R=J(f^{op})$ and an appropriate sequence
$\overrightarrow{ab}=2\;2\;\;2\;2\;2\;2$ we have that $F_{\overrightarrow{ab}}(R)$ is
given by

\begin{center}
\begin{picture}(300,60)
\put(0,20){\circle*{2}} \put(20,20){\circle*{2}}
\put(40,20){\circle*{2}} \put(60,20){\circle*{2}}
\put(80,20){\circle*{2}} \put(100,20){\circle*{2}}
\put(120,20){\circle*{2}} \put(140,20){\circle*{2}}
\put(160,20){\circle*{2}} \put(180,20){\circle*{2}}
\put(200,20){\circle*{2}} \put(220,20){\circle*{2}}
\put(240,20){\circle*{2}} \put(260,20){\circle*{2}}
\put(280,20){\circle*{2}} \put(300,20){\circle*{2}}

\put(120,50){\circle*{2}} \put(140,50){\circle*{2}}
\put(160,50){\circle*{2}} \put(180,50){\circle*{2}}

\put(120,50){\line(-4,-1){120}} \put(140,50){\line(-4,-3){40}}
\put(180,50){\line(4,-1){120}} \put(160,50){\line(4,-3){40}}

\put(0,10){\makebox(0,0)[b]{\scriptsize$0$}}
\put(20,10){\makebox(0,0)[b]{\scriptsize$1$}}
\put(40,10){\makebox(0,0)[b]{\scriptsize$2$}}
\put(60,10){\makebox(0,0)[b]{\scriptsize$3$}}
\put(80,10){\makebox(0,0)[b]{\scriptsize$4$}}
\put(100,10){\makebox(0,0)[b]{\scriptsize$5$}}
\put(120,10){\makebox(0,0)[b]{\scriptsize$6$}}
\put(140,10){\makebox(0,0)[b]{\scriptsize$7$}}
\put(160,10){\makebox(0,0)[b]{\scriptsize$8$}}
\put(180,10){\makebox(0,0)[b]{\scriptsize$9$}}
\put(200,10){\makebox(0,0)[b]{\scriptsize$10$}}
\put(220,10){\makebox(0,0)[b]{\scriptsize$11$}}
\put(240,10){\makebox(0,0)[b]{\scriptsize$12$}}
\put(260,10){\makebox(0,0)[b]{\scriptsize$13$}}
\put(280,10){\makebox(0,0)[b]{\scriptsize$14$}}
\put(300,10){\makebox(0,0)[b]{\scriptsize$15$}}

\put(0,2){\makebox(0,0)[b]{\tiny$0000$}}
\put(20,2){\makebox(0,0)[b]{\tiny$0001$}}
\put(40,2){\makebox(0,0)[b]{\tiny$0010$}}
\put(60,2){\makebox(0,0)[b]{\tiny$0011$}}
\put(80,2){\makebox(0,0)[b]{\tiny$0100$}}
\put(100,2){\makebox(0,0)[b]{\tiny$0101$}}
\put(120,2){\makebox(0,0)[b]{\tiny$0110$}}
\put(140,2){\makebox(0,0)[b]{\tiny$0111$}}
\put(160,2){\makebox(0,0)[b]{\tiny$1000$}}
\put(180,2){\makebox(0,0)[b]{\tiny$1001$}}
\put(200,2){\makebox(0,0)[b]{\tiny$1010$}}
\put(220,2){\makebox(0,0)[b]{\tiny$1011$}}
\put(240,2){\makebox(0,0)[b]{\tiny$1100$}}
\put(260,2){\makebox(0,0)[b]{\tiny$1101$}}
\put(280,2){\makebox(0,0)[b]{\tiny$1110$}}
\put(300,2){\makebox(0,0)[b]{\tiny$1111$}}

\put(120,55){\makebox(0,0)[b]{\scriptsize$0$}}
\put(140,55){\makebox(0,0)[b]{\scriptsize$1$}}
\put(160,55){\makebox(0,0)[b]{\scriptsize$2$}}
\put(180,55){\makebox(0,0)[b]{\scriptsize$3$}}

\put(120,63){\makebox(0,0)[b]{\tiny$00$}}
\put(140,63){\makebox(0,0)[b]{\tiny$01$}}
\put(160,63){\makebox(0,0)[b]{\tiny$10$}}
\put(180,63){\makebox(0,0)[b]{\tiny$11$}}
\end{picture}
\end{center}
which for the isomorphism
\[
\iota\!:(2\cdot 2)\times(2\cdot 2)\rightarrow (2\cdot2)\cdot (2\cdot 2)
\]
is equal to the composition $\iota\cirk\Delta$ given by
\begin{center}
\begin{picture}(300,60)
\put(0,20){\circle*{2}} \put(20,20){\circle*{2}}
\put(40,20){\circle*{2}} \put(60,20){\circle*{2}}
\put(80,20){\circle*{2}} \put(100,20){\circle*{2}}
\put(120,20){\circle*{2}} \put(140,20){\circle*{2}}
\put(160,20){\circle*{2}} \put(180,20){\circle*{2}}
\put(200,20){\circle*{2}} \put(220,20){\circle*{2}}
\put(240,20){\circle*{2}} \put(260,20){\circle*{2}}
\put(280,20){\circle*{2}} \put(300,20){\circle*{2}}

\put(120,50){\circle*{2}} \put(140,50){\circle*{2}}
\put(160,50){\circle*{2}} \put(180,50){\circle*{2}}

\put(120,50){\line(-4,-1){120}} \put(140,50){\line(-4,-3){40}}
\put(180,50){\line(4,-1){120}} \put(160,50){\line(4,-3){40}}

\put(0,10){\makebox(0,0)[b]{\scriptsize$0$}}
\put(20,10){\makebox(0,0)[b]{\scriptsize$1$}}
\put(40,10){\makebox(0,0)[b]{\scriptsize$2$}}
\put(60,10){\makebox(0,0)[b]{\scriptsize$3$}}
\put(80,10){\makebox(0,0)[b]{\scriptsize$4$}}
\put(100,10){\makebox(0,0)[b]{\scriptsize$5$}}
\put(120,10){\makebox(0,0)[b]{\scriptsize$6$}}
\put(140,10){\makebox(0,0)[b]{\scriptsize$7$}}
\put(160,10){\makebox(0,0)[b]{\scriptsize$8$}}
\put(180,10){\makebox(0,0)[b]{\scriptsize$9$}}
\put(200,10){\makebox(0,0)[b]{\scriptsize$10$}}
\put(220,10){\makebox(0,0)[b]{\scriptsize$11$}}
\put(240,10){\makebox(0,0)[b]{\scriptsize$12$}}
\put(260,10){\makebox(0,0)[b]{\scriptsize$13$}}
\put(280,10){\makebox(0,0)[b]{\scriptsize$14$}}
\put(300,10){\makebox(0,0)[b]{\scriptsize$15$}}

\put(0,2){\makebox(0,0)[b]{\tiny$00$}}
\put(20,2){\makebox(0,0)[b]{\tiny$01$}}
\put(40,2){\makebox(0,0)[b]{\tiny$02$}}
\put(60,2){\makebox(0,0)[b]{\tiny$03$}}
\put(80,2){\makebox(0,0)[b]{\tiny$10$}}
\put(100,2){\makebox(0,0)[b]{\tiny$11$}}
\put(120,2){\makebox(0,0)[b]{\tiny$12$}}
\put(140,2){\makebox(0,0)[b]{\tiny$13$}}
\put(160,2){\makebox(0,0)[b]{\tiny$20$}}
\put(180,2){\makebox(0,0)[b]{\tiny$21$}}
\put(200,2){\makebox(0,0)[b]{\tiny$22$}}
\put(220,2){\makebox(0,0)[b]{\tiny$23$}}
\put(240,2){\makebox(0,0)[b]{\tiny$30$}}
\put(260,2){\makebox(0,0)[b]{\tiny$31$}}
\put(280,2){\makebox(0,0)[b]{\tiny$32$}}
\put(300,2){\makebox(0,0)[b]{\tiny$33$}}

\put(120,55){\makebox(0,0)[b]{\scriptsize$0$}}
\put(140,55){\makebox(0,0)[b]{\scriptsize$1$}}
\put(160,55){\makebox(0,0)[b]{\scriptsize$2$}}
\put(180,55){\makebox(0,0)[b]{\scriptsize$3$}}
\end{picture}
\end{center}
Hence, this is the diagonal arrow from $2\cdot 2$ to $(2\cdot
2)\cdot (2\cdot 2)$ in $\seto$.}
\end{exm}

By reasoning as in Examples \ref{pr1} and \ref{pr2}, we can prove
the following lemma.

\begin{lem}\label{lema6}
The composition $F_{\overrightarrow{ab}}\cirk J$ from $\seto^{op}$ to
$\seto$, for appropriate $\overrightarrow{ab}$, maps projections to
projections and diagonal arrows to diagonal arrows.
\end{lem}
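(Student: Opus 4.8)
The plan is to follow the pattern laid out in Examples~\ref{pr1} and~\ref{pr2}, exploiting the fact that the generators of a strict monoidal category with finite products can be taken to be the projections $\hat{k}^1$, $\hat{k}^2$ and the diagonals $\hat{w}$ (together with the unique arrows to the terminal object, which are handled trivially since the empty product $1$ corresponds to the empty sequence). An arrow $f^{op}$ of $\seto^{op}$ that is a projection $\hat{k}^1\!:n+m\rightarrow n$ corresponds in $\seto$ to the first injection $\check k^1$, and under $J$ becomes the split equivalence $R=J((\check k^1)^{op})$ whose classes pair each $i<n$ in the source with the element $i+ (n+m)$ in the target (the ``straight lines'' of the first example), leaving the remaining $m$ source elements as singletons; dually for $\hat{k}^2$. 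A diagonal $\hat{w}\!:n\rightarrow n+n$ of $\seto^{op}$ corresponds to the codiagonal in $\seto$, whose image under $J$ pairs each target element $i$ with the two source-side elements $i$ and $i+n$ (shifted appropriately). So the lemma amounts to computing $F_{\overrightarrow{ab}}$ on exactly these three shapes of split equivalence and recognising the result.

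First I would treat the projection case. Write $\overrightarrow a = a_0\ldots a_{n-1}$ and note that appropriateness forces the block of $b$'s indexed by the target to coincide with the corresponding $a$'s (the ones ``hit'' by the injection), while the other $a$-block is unconstrained. By Lemma~\ref{lema0}, $F_{\overrightarrow{ab}}(R)$ is a function; I must identify it. Unwinding the definition, $(i,j)\in F_{\overrightarrow{ab}}(R)$ iff the tuple $\iota_{\overrightarrow a}^{-1}(i)$ agrees with $\iota_{\overrightarrow b}^{-1}(j)$ on exactly the coordinates matched by $R$, i.e.\ on the first $n'$ (say) coordinates of $\overrightarrow a$ paired with all of $\overrightarrow b$. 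This says precisely that $j$ is obtained from $i$ by projecting away the coordinates of $i$ not paired by $R$ — which is exactly the generalized projection $\hat{k}^{1}$ (or $\hat{k}^{2}$) between the corresponding products in $\seto$, just as computed explicitly in Example~\ref{pr1}. For the diagonal case, appropriateness forces each of the two target $b$-blocks to equal the source $a$-block; then $(i,j)\in F_{\overrightarrow{ab}}(R)$ iff $\iota_{\overrightarrow b}^{-1}(j)$ is the concatenation of $\iota_{\overrightarrow a}^{-1}(i)$ with itself, which is exactly $\iota\cirk\Delta$, i.e.\ the diagonal $\hat{w}$ in $\seto$, as in Example~\ref{pr2}.

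Finally I would assemble these into the statement: since $J$ is the identity on objects and sends (the opposites of) the generating projections and diagonals of $\seto^{op}$ — viewed as a category with finite products — to the split equivalences treated above, and since $F_{\overrightarrow{ab}}$ applied to those split equivalences yields, respectively, projections and diagonal arrows of $\seto$, the composite $F_{\overrightarrow{ab}}\cirk J$ has the asserted behaviour. The one point requiring care — the ``main obstacle'' — is the bookkeeping of indices: getting the shifts in the split equivalence right (the $+n$, $+(n+m)$ offsets from the definition of $J$ and of $\check k^1$, $\check k^2$, $\check w$) and then checking that the coordinate-agreement condition defining $F_{\overrightarrow{ab}}(R)$ translates, under the lexicographic coding $\iota_{\overrightarrow d}^{-1}$, into exactly the formula for the projection or diagonal given in Section~3. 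This is entirely mechanical, and the two worked examples already exhibit the computation in full; the general case differs only in carrying symbolic block-lengths $a_i$, $b_j$ in place of the specific values $2$ and $3$.
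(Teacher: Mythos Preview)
Your proposal is correct and follows essentially the same approach as the paper, which simply says ``by reasoning as in Examples~\ref{pr1} and~\ref{pr2}'' and leaves the details to the reader. You have spelled out those details---identifying the split equivalences $J((\check k^i)^{op})$ and $J(\check w^{op})$, unwinding the coordinate-agreement condition defining $F_{\overrightarrow{ab}}$, and recognising the result as $\hat k^i$ and $\hat w$---exactly as the examples suggest, so there is nothing to add.
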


If $a_0=\ldots=b_{m-1}=p\geq 2$, then $F_{\overrightarrow{ab}}(R)$ is denoted
by $F_p(R)$, and it coincides with $F_p(R)$ defined in
\cite[\S5]{DP03a}. In the following lemma we have $F_p$ with
$p=2$.

\begin{lem}\label{lema2}
If $F_{\overrightarrow{ab}}(R)=F_{\overrightarrow{ab}}(S)$, then
$F_2(R)=F_2(S)$.
\end{lem}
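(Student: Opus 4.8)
The plan is to prove the lemma by a pointwise argument, showing that $F_2(R)$ is obtained from $F_{\overrightarrow{ab}}(R)$ by an operation that does not mention $R$ at all. The starting point is a reformulation of the defining condition for $F_{\overrightarrow{ab}}$. Since $R$ is an \emph{equivalence} relation on $n\pl m$, the condition $(\forall x,y\in n\pl m)((x,y)\in R\Rightarrow\pi_x(\overrightarrow{c})=\pi_y(\overrightarrow{c}))$ is equivalent to saying that the $(n\pl m)$-tuple $\overrightarrow{c}$ is constant on each equivalence class of $R$. Thus $(i,j)\in F_{\overrightarrow{ab}}(R)$ iff the concatenation of $\iota_{\overrightarrow{a}}^{-1}(i)$ and $\iota_{\overrightarrow{b}}^{-1}(j)$ is constant on the $R$-classes, and the same description, with every entry of $\overrightarrow{ab}$ replaced by $2$, describes $F_2(R)$.

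Next I would isolate, among the tuples $\overrightarrow{c}$ occurring above, those that take only the values $0$ and $1$. Because each entry of $\overrightarrow{ab}$ is $\geq 2$, every $\{0,1\}$-valued $(n\pl m)$-tuple is a legitimate point of $a_0\times\ldots\times a_{n-1}$ in its first $n$ coordinates and of $b_0\times\ldots\times b_{m-1}$ in its last $m$ coordinates; hence $\iota_{\overrightarrow{a}}$ and $\iota_{\overrightarrow{b}}$ restrict to injections $\alpha\!:2^n\rightarrow a_0\cdot\ldots\cdot a_{n-1}$ and $\beta\!:2^m\rightarrow b_0\cdot\ldots\cdot b_{m-1}$, obtained by applying the all-$2$ instance of $\iota^{-1}$ (of length $n$, respectively $m$) and then including $\{0,1\}$-tuples into the larger products. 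The maps $\alpha$ and $\beta$ depend only on $\overrightarrow{ab}$. The crucial step is then the identity
\[
F_2(R)=(\alpha\times\beta)^{-1}(F_{\overrightarrow{ab}}(R)),
\]
i.e.\ $(i',j')\in F_2(R)$ iff $(\alpha(i'),\beta(j'))\in F_{\overrightarrow{ab}}(R)$. This is immediate from the first paragraph: the tuple associated to $(\alpha(i'),\beta(j'))$ via $\iota_{\overrightarrow{a}}^{-1}$ and $\iota_{\overrightarrow{b}}^{-1}$ is, as a tuple, exactly the $\{0,1\}$-valued tuple associated to $(i',j')$ in the definition of $F_2(R)$, so one side is constant on the $R$-classes iff the other is.

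Granting this identity, the lemma follows at once, since $\alpha$ and $\beta$ are independent of $R$ and $S$:
\[
F_2(R)=(\alpha\times\beta)^{-1}(F_{\overrightarrow{ab}}(R))=(\alpha\times\beta)^{-1}(F_{\overrightarrow{ab}}(S))=F_2(S).
\]
I do not expect a real obstacle here; the only points needing a little care are the bookkeeping among the maps $\iota_{\overrightarrow{a}}$, $\iota_{\overrightarrow{b}}$ and the all-$2$ instances of $\iota$, and the observation that passing to the $\{0,1\}$-valued tuples loses nothing precisely because all the $a_k$ and $b_l$ are at least $2$. (The same reformulation in fact gives more than is asked: taking $\overrightarrow{c}$ equal to $1$ on the $R$-class of a chosen element and $0$ elsewhere recovers $R$ itself from $F_{\overrightarrow{ab}}(R)$, so for each fixed $\overrightarrow{ab}$ the assignment $R\mapsto F_{\overrightarrow{ab}}(R)$ is injective on parallel arrows of \emph{Gen}; but only the weaker statement is needed in what follows.)
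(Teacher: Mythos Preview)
Your proof is correct and is essentially the same as the paper's. The paper also shows, for each $(i,j)\in 2^n\times 2^m$, that $(i,j)\in F_2(R)$ iff $(\iota_{\overrightarrow{a}}(\iota^{-1}_{2^n}(i)),\iota_{\overrightarrow{b}}(\iota^{-1}_{2^m}(j)))\in F_{\overrightarrow{ab}}(R)$; your maps $\alpha$ and $\beta$ are precisely $\iota_{\overrightarrow{a}}\cirk\iota^{-1}_{2^n}$ and $\iota_{\overrightarrow{b}}\cirk\iota^{-1}_{2^m}$, and your identity $F_2(R)=(\alpha\times\beta)^{-1}(F_{\overrightarrow{ab}}(R))$ is the same biconditional rewritten. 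Your closing parenthetical observation (that one can recover $R$ from $F_{\overrightarrow{ab}}(R)$ directly) goes a step beyond what is proved here in the paper, which obtains that injectivity only after combining the present lemma with the separately cited Lemma~\ref{lema3}.
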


\begin{proof}
Let $2^k$ in the index of $\iota^{-1}$ denote the sequence of $k$ occurrences of 2.
For $R,S\!:n\rightarrow m$, for every $(i,j)\in
2^n\times 2^m$, we have that $(i,j)\in F_2(R)$
\begin{tabbing}
\hspace{1.5em}\= iff \= $(\forall x,y\in n\pl m)\; ((x,y)\in
R\Rightarrow
\pi_x(\iota^{-1}_{2^n}(i)\,\iota^{-1}_{2^m}(j))=\pi_y(\iota^{-1}_{2^n}(i)\,\iota^{-1}_{2^m}(j)))$,
\\[1ex]
\> iff \>
$(\iota_{\overrightarrow{a}}(\iota^{-1}_{2^n}(i)),\iota_{\overrightarrow{b}}(\iota^{-1}_{2^m}(j)))
\in F_{\overrightarrow{ab}}(R)$,
\\*[.5ex]
\> \> since
$\iota^{-1}_{2^n}(i)=\iota^{-1}_{\overrightarrow{a}}(\iota_{\overrightarrow{a}}(\iota^{-1}_{2^n}(i)))$
and
$\iota^{-1}_{2^m}(j)=\iota^{-1}_{\overrightarrow{b}}(\iota_{\overrightarrow{b}}(\iota^{-1}_{2^m}(j)))$.
\end{tabbing}

We conclude the same for $R$ replaced by $S$.
\end{proof}

Then, as a corollary of Proposition~5 of \cite[\S5]{DP03a}, we obtain the following lemma.

\begin{lem}\label{lema3}
If $F_2(R)=F_2(S)$, then $R=S$.
\end{lem}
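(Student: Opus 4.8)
The statement to prove is Lemma~\ref{lema3}: if $F_2(R)=F_2(S)$ then $R=S$, where $R,S$ are arrows of \emph{Gen}. The text says this is ``a corollary of Proposition~5 of \cite[\S5]{DP03a}''. So the plan is to recall what that cited Proposition~5 says and to bridge any gap between its formulation and the formulation needed here. I expect Proposition~5 of \cite{DP03a} to assert exactly the injectivity (faithfulness) of the assignment $R\mapsto F_p(R)$ on \emph{Gen}-arrows, at least for $p$ large enough, or possibly the stronger claim that $F_p$ is the arrow part of a faithful functor $\mbox{\emph{Gen}}\to\rel$ (or $\seto$-like category of relations). The one subtlety is the value of $p$: coherence-type representation results of this kind typically require $p\geq 2$ and sometimes are stated for ``$p$ sufficiently large'' or for all $p\geq 2$; the text explicitly notes ``In the following lemma we have $F_p$ with $p=2$'', so the task reduces to checking that $p=2$ already suffices.

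First I would state precisely the content of Proposition~5 of \cite{DP03a}. If it is already phrased as ``$F_2$ is injective on \emph{Gen}$(n,m)$ for all $n,m$'', then Lemma~\ref{lema3} is literally that statement and there is nothing further to do beyond citing it. If instead it is phrased as ``$F_p$ is injective for $p\geq 2$'' or ``$F_p$ is faithful as a functor'', then again the case $p=2$ is included directly. The only scenario requiring real work is if \cite{DP03a} proves injectivity only for $p$ above some threshold; in that case I would argue that $F_2$ still separates arrows by the following standard descent argument: a \emph{Gen}-arrow $R$ on $n+m$ is determined by which pairs $(x,y)$ are $R$-related, and $(i,j)\in F_2(R)$ records, via the binary digits of $i$ (a point of $2^n$) and of $j$ (a point of $2^m$), whether a given $\{0,1\}$-labelling of the $n+m$ positions is ``compatible'' with $R$ (constant on $R$-classes). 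Since \emph{Gen}-classes are equivalence classes, two distinct equivalence relations $R\neq S$ differ on some pair $(x_0,y_0)$, say $(x_0,y_0)\in R\setminus S$; choosing the labelling that is $1$ on the $S$-class of $x_0$ and $0$ elsewhere gives a point in $F_2(S)$ but not in $F_2(R)$, because that labelling is not constant on the $R$-class joining $x_0$ and $y_0$. This shows directly that $F_2(R)=F_2(S)$ forces $R=S$, independently of any threshold on $p$.

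The main obstacle — really the only one — is bookkeeping: matching the indexing conventions of the present paper (the maps $\iota_{2^n}$, $\iota_{2^m}$ identifying $2^n$ with $2\cdot\ldots\cdot 2$ and the lexicographic decoding of a natural number into a binary $n$-tuple) with the conventions of \cite{DP03a}, so that ``$F_2$ defined here'' genuinely coincides with ``$F_p$ with $p=2$ in \cite{DP03a}''. This coincidence was already asserted in the paragraph preceding Lemma~\ref{lema2} (``it coincides with $F_p(R)$ defined in \cite[\S5]{DP03a}''), so I would simply invoke that identification and then quote Proposition~5. Thus the proof is one or two lines: \emph{By the identification of $F_2$ with the map $F_p$ of \cite[\S5]{DP03a} for $p=2$, the claim is exactly Proposition~5 of \cite[\S5]{DP03a}} — with the self-contained labelling argument above available as a fallback if the cited proposition is stated under a stricter hypothesis on $p$.
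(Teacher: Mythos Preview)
Your proposal is correct and matches the paper's approach exactly: the paper gives no proof beyond the sentence ``as a corollary of Proposition~5 of \cite[\S5]{DP03a}'', and you correctly identify that citation as the entire content, together with the identification of $F_2$ here with $F_p$ for $p=2$ in \cite{DP03a} already asserted before Lemma~\ref{lema2}. Your additional self-contained separating-labelling argument is a genuine bonus not present in the paper; it is sound and would make the lemma independent of the cited reference.
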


We prove the following lemma along the lines of Proposition~4 of \cite[\S5]{DP03a}.

\begin{lem}\label{lema4}
The function $F_{\overrightarrow{aa}}$ maps the identity to the identity, and
for the arrows ${R\!:n\rightarrow m}$ and $S\!:m\rightarrow p$ of Gen and
appropriate sequences $\overrightarrow{ab}$ and $\overrightarrow{bc}$, we have that
\[F_{\overrightarrow{bc}}(S)\cirk F_{\overrightarrow{ab}}(R)=F_{\overrightarrow{ac}}(S\cirk R).\]
\end{lem}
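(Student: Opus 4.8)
The plan is to reformulate membership in $F_{\overrightarrow{ab}}(R)$ so that the bookkeeping becomes transparent, and then to read off both assertions. First I would observe that, since $\iota_{\overrightarrow{a}}$ and $\iota_{\overrightarrow{b}}$ are bijections, a pair $(i,j)$ lies in $F_{\overrightarrow{ab}}(R)$ exactly when the $(n\pl m)$-tuple obtained by concatenating $\iota^{-1}_{\overrightarrow{a}}(i)$ with $\iota^{-1}_{\overrightarrow{b}}(j)$ is constant on every class of $R$; conversely, any $(n\pl m)$-tuple whose $x$-th entry lies in the $x$-th entry of $\overrightarrow{ab}$ and which is constant on the classes of $R$ arises this way from a unique $(i,j)$. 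With this reformulation, the claim that $F_{\overrightarrow{aa}}$ maps the identity to the identity is immediate: the identity of \emph{Gen} from $n$ to $n$ has classes $\{x,x\pl n\}$, so constancy on classes forces the first half of the tuple to equal the second, i.e.\ $i=j$.

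For the equation I would use the three-block picture of composition in \emph{Gen}: write $N$, $M$, $P$ for the blocks of sizes $n$, $m$, $p$ inside $n\pl m\pl p$, view $R$ as an equivalence on $N\cup M$ and $S$ as one on $M\cup P$, let $T=\overline{R\cup S}$ be the transitive closure of their union, and recall that the non-empty traces $C\cap(N\cup P)$ of the $T$-classes $C$ are exactly the classes of $S\cirk R$ (the $T$-classes contained in $M$ being the ones deleted in forming $S\cirk R$). The key preliminary point, and the place where \emph{appropriateness} is used, is that the concatenation $\overrightarrow{abc}$, read as a map from $n\pl m\pl p$ to the ordinals, is constant on every $T$-class: it is constant on $R$-classes because $\overrightarrow{ab}$ is appropriate for $R$, constant on $S$-classes because $\overrightarrow{bc}$ is appropriate for $S$, the two agree on $M$ (both give $\overrightarrow{b}$ there), so the equivalence ``same $\overrightarrow{abc}$-value'' contains $R\cup S$ and hence $T$.

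Given this, both inclusions are routine. For $F_{\overrightarrow{bc}}(S)\cirk F_{\overrightarrow{ab}}(R)\subseteq F_{\overrightarrow{ac}}(S\cirk R)$: from $(i,j)\in F_{\overrightarrow{ab}}(R)$ and $(j,k)\in F_{\overrightarrow{bc}}(S)$ one gets that the tuple $\iota^{-1}_{\overrightarrow{a}}(i)\,\iota^{-1}_{\overrightarrow{b}}(j)\,\iota^{-1}_{\overrightarrow{c}}(k)$ on $N\cup M\cup P$ is constant on $R$-classes and on $S$-classes, hence on $T$-classes, hence its restriction to $N\cup P$ is constant on the classes of $S\cirk R$, which is exactly $(i,k)\in F_{\overrightarrow{ac}}(S\cirk R)$. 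For the reverse inclusion, start from $(i,k)\in F_{\overrightarrow{ac}}(S\cirk R)$, so the tuple $\iota^{-1}_{\overrightarrow{a}}(i)\,\iota^{-1}_{\overrightarrow{c}}(k)$ on $N\cup P$ is constant on each set $C\cap(N\cup P)$; extend it to a tuple $\overrightarrow{e}$ on $N\cup M\cup P$ by giving each $T$-class $C$ meeting $N\cup P$ its common value there and each $T$-class inside $M$ an arbitrary element of the corresponding ordinal (non-empty, being $\geq 2$). The preliminary point guarantees that on $M$ these values land in the appropriate entries of $\overrightarrow{b}$, so $\overrightarrow{e}\!\restriction\! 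M$ defines some $j$; since $\overrightarrow{e}$ is constant on $T$-classes it is constant on $R$-classes and on $S$-classes, giving $(i,j)\in F_{\overrightarrow{ab}}(R)$ and $(j,k)\in F_{\overrightarrow{bc}}(S)$.

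I expect the only real subtlety to be this last direction: one must make sure the extension $\overrightarrow{e}$ can be carried out \emph{within} the prescribed codomains $b_0,\ldots,b_{m-1}$, and it is precisely appropriateness of $\overrightarrow{ab}$ and $\overrightarrow{bc}$ — through constancy of $\overrightarrow{abc}$ on $T$-classes — that makes this possible, while the $T$-classes lying entirely in $M$ are disposed of by an arbitrary choice. Everything else is just unwinding the definition of composition in \emph{Gen} and of $F_{\overrightarrow{ab}}$, as in Proposition~4 of \cite[\S5]{DP03a}.
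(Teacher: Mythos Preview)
Your proposal is correct and is precisely the argument the paper has in mind: the paper does not actually write out a proof of this lemma but only says it is proved ``along the lines of Proposition~4 of \cite[\S5]{DP03a}'', and what you have sketched is exactly that argument, suitably generalized from the uniform case $a_0=\cdots=b_{m-1}=p$ to arbitrary appropriate sequences. Your identification of the one genuine point of care---that appropriateness of $\overrightarrow{ab}$ and $\overrightarrow{bc}$ forces $\overrightarrow{abc}$ to be constant on the $T$-classes, so that the middle tuple $\overrightarrow{e}\!\restriction\! M$ lands in the prescribed codomains $b_0,\ldots,b_{m-1}$---is exactly the new ingredient beyond the uniform case, and you handle it correctly.
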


Let $\mbox{\emph{Gen}}_P$ be the category whose objects are all the
finite sequences of elements of $P$ (as in the category
$\mathcal{C}$) and whose arrows are all the triples of the form
$(R,p_{i_0}\ldots p_{i_{n-1}},p_{j_0}\ldots p_{j_{m-1}})$, where
$R\!:n\rightarrow m$ is an arrow of \emph{Gen} while $p_{i_0}\ldots
p_{i_{n-1}}$ and $p_{j_0}\ldots p_{j_{m-1}}$ are objects of the
category $\mathcal{C}$. From the faithfulness of the composition
$J\cirk G\!:\mathcal{C}\rightarrow\mbox{\emph{Gen}}$, it follows that the
functor $(J\cirk G)_P\!:\mathcal{C}\rightarrow \mbox{\emph{Gen}}_P$,
which is identity on objects, and which maps the arrow
$f\!:p_{i_0}\ldots p_{i_{n-1}}\rightarrow p_{j_0}\ldots p_{j_{m-1}}$ of
$\mathcal{C}$ to $(JGf,p_{i_0}\ldots p_{i_{n-1}},p_{j_0}\ldots
p_{j_{m-1}})$ is also faithful.

Let $\relo$ be the category whose arrows are binary relations
between finite ordinals, and let $F$ be the functor from \emph{Gen} to $\relo$ defined
on objects by
\[
F(p_{i_0}\ldots p_{i_{n-1}})=\mathbf{p}_{i_0}\cdot\ldots\cdot
\mathbf{p}_{i_{n-1}},
\]
and on arrows, for $\overrightarrow{ab}$ the sequence $\mathbf{p}_{i_0}\ldots
\mathbf{p}_{j_{m-1}}$, by
\[
F(R,p_{i_0}\ldots p_{i_{n-1}},p_{j_0}\ldots
p_{j_{m-1}})=F_{\overrightarrow{ab}}(R).
\]
From Lemmata \ref{lema2}-\ref{lema4}, it follows that $F$ is a faithful functor. Hence, the functor $F\cirk(J\cirk
G)_P\!:\mathcal{C}\rightarrow \relo$ is also faithful. By
Lemma \ref{lema0}, we may restrict this functor to a functor from
$\mathcal{C}$ to $\seto$, which we also call $F\cirk(J\cirk G)_P$,
for which we have the following.

\begin{lem}\label{lema5}
$F\cirk(J\cirk G)_P=H$.
\end{lem}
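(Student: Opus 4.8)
The plan is to show that the two functors $F\cirk(J\cirk G)_P$ and $H$ agree by checking that they agree on objects and on a generating set of arrows, and then invoking the uniqueness of structure-preserving functors out of the freely generated category $\mathcal{C}$. Since $\mathcal{C}$ is freely generated in $\catt$ by the set $P$, any two arrows of $\catt$ out of $\mathcal{C}$ that coincide on $P$ must coincide; so it suffices to verify (i) that $F\cirk(J\cirk G)_P$ really is an arrow of $\catt$, i.e.\ that it preserves finite strict monoidal products on the nose, and (ii) that it sends each $p_n$ to $\mathbf{p}_n$, exactly as $H$ does.

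For (ii): $G$ sends $p_n$ to the object $1$ of $\seto^{op}$, $J$ is the identity on objects, and $F$ (the functor $\mbox{\emph{Gen}}\rightarrow\relo$ just defined) sends the one-element sequence $p_n$ to $\mathbf{p}_n$ by its definition on objects. Chasing these through, $F\cirk(J\cirk G)_P$ carries $p_n$ to $\mathbf{p}_n$, and by definition $H$ does the same; moreover both functors send a general object $p_{i_0}\ldots p_{i_{n-1}}$ of $\mathcal{C}$ to $\mathbf{p}_{i_0}\cdot\ldots\cdot\mathbf{p}_{i_{n-1}}$ in $\seto$, since $H$ preserves products on the nose and $F$ was defined on objects by exactly that product formula. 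So the two functors agree on objects.

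For (i): I would argue that $F\cirk(J\cirk G)_P$ preserves the terminal object (the empty product) and binary products. The empty product in $\mathcal{C}$ is the empty sequence, which goes to the object $1$ of $\seto$ on both sides. Preservation of binary products reduces, by the usual argument, to checking that the structure-carrying arrows are sent where they should be: the canonical projections $\hat{k}^1,\hat{k}^2$ of $\mathcal{C}$ must be sent to the canonical projections of $\seto$, and the diagonals $\hat{w}$ to the diagonals; these canonical arrows arise in $\mathcal{C}$ as composites built from the free generators' projections and diagonals together with the associativity/unit arrows of the strict monoidal product (which, being identities on the relevant objects in the skeletal setting, cause no trouble). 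This is exactly the content supplied by Lemma \ref{lema6}: the composite $F_{\overrightarrow{ab}}\cirk J$ from $\seto^{op}$ to $\seto$ sends projections to projections and diagonals to diagonals, and Examples \ref{pr1} and \ref{pr2} exhibit the two representative computations. Functoriality of the whole composite — needed to know that preservation on generators propagates to all product cones — is Lemma \ref{lema4}. Combining these, $F\cirk(J\cirk G)_P$ is an arrow of $\catt$ with the same action on $P$ as $H$, so by freeness of $\mathcal{C}$ the two coincide.

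The main obstacle I anticipate is the bookkeeping around ``on the nose'': one has to be careful that the canonical product projections and diagonals of $\mathcal{C}$, as composites of free-generator data with structural isomorphisms, are genuinely matched by the corresponding canonical arrows of $\seto$ under the composite functor — i.e.\ that the $\iota,\iota^{-1}$ reindexings implicit in the definition of products in $\seto$ (Section~3) line up with the iterated $\iota_{\overrightarrow{d}},\iota_{\overrightarrow{d}}^{-1}$ used in defining $F_{\overrightarrow{ab}}$. Examples \ref{pr1} and \ref{pr2} are precisely the verification that these reindexings agree (the displayed pictures show $F_{\overrightarrow{ab}}(J(f^{op}))$ equalling $\pi_1\cirk\iota^{-1}$ and $\iota\cirk\Delta$ respectively), so once Lemma \ref{lema6} is in hand the remaining work is the routine uniqueness-from-freeness argument rather than any further hard computation.
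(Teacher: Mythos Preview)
Your proposal is correct and follows essentially the same route as the paper: verify that $F\cirk(J\cirk G)_P$ sends each $p_i$ to $\mathbf{p}_i$, check that it preserves the finite product structure on the nose (using Lemma~\ref{lema6} for projections and diagonals), and then invoke the uniqueness of $H$ as the $\catt$-arrow extending $p_n\mapsto\mathbf{p}_n$. The paper's proof is terser but structurally identical; your additional remarks about the $\iota$-reindexing bookkeeping and the role of Lemma~\ref{lema4} in functoriality simply make explicit what the paper takes for granted (functoriality of $F$ having been established from Lemmata~\ref{lema2}--\ref{lema4} just before the statement).
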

\begin{proof}
The functor $F\cirk(J\cirk G)_P$ maps a generator $p_i$ to the
prime number $\mathbf{p}_i$ and it preserves finite products. By
Lemma \ref{lema6}, it preserves also the rest of the finite
product structure. It remains to apply the uniqueness of $H$ with
these properties.
\end{proof}
Proposition \ref{tvrdjenje1}, which asserts that $H$ is faithful, follows from Lemma \ref{lema5} and
the faithfulness of $F\cirk(J\cirk G)_P$.

Let $sk(\mathcal{C})$ be
a skeleton of $\mathcal{C}$, and let $I$ be the inclusion functor from $sk(\mathcal{C})$ to $\mathcal{C}$. We may consider $sk(\mathcal{C})$
to be the full subcategory of $\mathcal{C}$ on objects of the form $p_{i_1}\ldots p_{i_n}$ with $i_1\leq\ldots\leq i_n$.

\begin{prop}\label{iso}
The composition $H\cirk I\!:sk(\mathcal{C})\rightarrow H\mathcal{C}$ is
an isomorphism.
\end{prop}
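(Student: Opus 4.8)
The plan is to deduce this quickly from Proposition~\ref{tvrdjenje1}. I will use the general observation that a faithful functor which is one-one on objects restricts to an isomorphism onto its image: such a functor is then a bijection on objects (onto the image by construction, injective by hypothesis), the image is a genuine subcategory precisely because the functor is one-one on objects, and it is moreover a bijection on arrows, since whenever it identifies two arrows those arrows share a source and a target by one-one-ness on objects, hence coincide by faithfulness; the inverse assignments on objects and arrows are then plainly functorial. So it suffices to check that $H\cirk I$ is faithful and one-one on objects, and $H\mathcal{C}$ in the statement is then just its image.

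Faithfulness carries no new content: $I$ is faithful, being the inclusion of a full subcategory, and $H$ is faithful by Proposition~\ref{tvrdjenje1}, so $H\cirk I$ is faithful. This is really all that is deep here; the proposition is in effect a restatement of Proposition~\ref{tvrdjenje1}.

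For one-one-ness on objects I would argue via unique factorization into primes, which is exactly the reason the generators of $\mathcal{C}$ were sent to prime numbers. An object of $sk(\mathcal{C})$ is a sequence $p_{i_1}\ldots p_{i_n}$ with $i_1\leq\ldots\leq i_n$, and $H$ sends it to $\mathbf{p}_{i_1}\cdot\ldots\cdot\mathbf{p}_{i_n}$, the product taken in $\seto$, i.e.\ the ordinary product of those primes as a positive integer. From this integer the multiset $\{\mathbf{p}_{i_1},\ldots,\mathbf{p}_{i_n}\}$ is recovered by unique factorization, hence so is the multiset $\{i_1,\ldots,i_n\}$; and since the indices of an object of $sk(\mathcal{C})$ are listed in non-decreasing order, the object is determined by that multiset. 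Thus distinct objects of $sk(\mathcal{C})$ have distinct $H$-images. (In fact the object map is then a bijection from the objects of $sk(\mathcal{C})$ onto the objects of $\seto$ other than $0$, every positive integer being a product of primes, but only injectivity is needed.)

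Putting the two verifications together with the observation of the first paragraph yields that $H\cirk I\colon sk(\mathcal{C})\to H\mathcal{C}$ is an isomorphism. I do not expect any genuine obstacle: the coherence and representation results of the earlier sections have already been spent on establishing the faithfulness of $H$, and the one remaining ingredient—injectivity of the prime-product map on non-decreasing sequences of propositional letters—is elementary.
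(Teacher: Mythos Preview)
Your proposal is correct and follows essentially the same route as the paper: both derive faithfulness of $H\cirk I$ directly from Proposition~\ref{tvrdjenje1}, and both obtain injectivity on objects from the choice of prime numbers as images of the generators together with preservation of products (i.e.\ unique factorization). Your write-up is simply more explicit than the paper's two-line proof, in particular in spelling out why faithful plus one-one on objects yields an isomorphism onto the image and why non-decreasing sequences of primes are determined by their product.
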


\begin{proof}
From Proposition \ref{tvrdjenje1} we have that this composition is
faithful. Since $p_i$ is mapped by $H\cirk I$ to the $i$-th prime
number, and this functor preserves products, it is one-one on
objects.
\end{proof}

The category $sk(\mathcal{C})$ is also a skeleton of the category
with finite products freely generated by $P$. It is free in the
sense that every function from $P$ to the set of objets of a
category from $\catt$ in which product is commutative on objects
extends to a unique arrow of $\catt$ from $sk(\mathcal{C})$ to
this category.

Our categories $\mathcal{D}$ and $sk(\mathcal{C})$ are not exactly
the categories mentioned in the second paragraph of the
introduction. However, they are equivalent to these categories. In
logical terminology, this is just as if we worked with equivalence
classes of formulae instead of usually defined formulae. In the
case of disjunctive formulae, they are identified up to
associativity (we get commutativity for free because we have a
single letter), and in the case of conjunctive formulae, they are
identified up to associativity and commutativity.

By taking opposite categories, one can replace coproducts with
products, disjunction with conjunction and vice versa to obtain
the dual results. Hence, there is nothing asymmetric that gives
priority to disjunction over conjunction in these matters.

\section{Connection with the exponential and contra\-variant power-set functors}
In this concluding section we consider matters that connect our representation of conjunctive deductions by disjunctive deductions with a particular, well-behaved and rather familiar, case of the Brauerian representation of \cite{DP03a}. We consider first the representation of an equivalence relation $R$ by a set of functions ${\cal F}^=(R)$, which we dealt with in \cite[Section~4]{DP03a}, and which engenders the representation by $F_p(R)$, which we dealt with in \cite[Section~5]{DP03a}, and which is closely related to the representation by $F_{\overrightarrow{ab}}(R)$ of this paper (see Section~4 above). The set of functions ${\cal F}^=(R)$ can be replaced by a relation between functions ${\cal F}_{X_1,X_2}(R)$, with $X$ being the disjoint union of $X_1$ and $X_2$. For every equivalence relation $R\subseteq X^2$ there is a function $\Phi\!:X_2\rightarrow X_1$ such that ${\cal F}_{X_1,X_2}(R)$ is equal to the function $p^\Phi$ that maps a function $f_1\!:{X_1\rightarrow p}$ to the function $f_1\cirk\Phi\!:{X_2\rightarrow p}$. Finally, we consider how our representation of conjunctive deductions by disjunctive deductions is related to the exponential functor $p^-$ from \set\ to $\set^{op}$, which on arrows is defined as $p^\Phi$. The exponential functor $2^-$ is naturally isomorphic to the contravariant power-set functor.

For an arbitrary equivalence relation $R\subseteq X^2$ and an arbitrary set $p$ such that for $p_0\neq p_1$ we have $p_0,p_1\in p$, let ${\cal F}^=(R)$ be the set of all functions $f\!:X\rightarrow p$ such that
\begin{tabbing}
\hspace{1.7em}$(\ast)$\hspace{5em}$(\forall x,y\in X)(xRy\Rightarrow f(x)=f(y)).$
\end{tabbing}
It is shown in \cite[Section~4, Corollary]{DP03a} that for $R_1,R_2\subseteq X^2$
equivalence relations we have $R_1= R_2$ iff ${\cal F}^=(R_1)={\cal F}^= (R_2)$.

For an equivalence relation $R\subseteq X^2$, consider the partition of $X$ induced by $R$. Let $X_1$ be a set of representatives of these equivalence classes, one for each class (for the existence of this set one relies on the Axiom of Choice when $X$ is infinite), and let $X_2$ be the complement of $X_1$ with respect to $X$. (The sets $X_1$ and $X_2$ can both be empty and $X_1$ can be nonempty with $X_2$ empty, but $X_1$ cannot be empty with $X_2$ nonempty.) So $X=X_1+X_2$, where $+$ is disjoint union (coproduct in \set).

Let $\Phi\!:X_2\rightarrow X_1$ be the function that maps every element of $X_2$ to the representative of its equivalence class. Assuming the sets $X_1$ and $X_2$ are given, for every function $f\!:X\rightarrow p$ there is a unique pair of functions $(f_1\!:X_1\rightarrow p,f_2\!:X_2\rightarrow p)$ such that $f=[f_1,f_2]$. We can easily verify that $(\ast)$ above is equivalent with
\begin{tabbing}
\hspace{1.7em}$(\ast\ast)$\hspace{5em}$(\forall x_1\in X_1)(\forall x_2\in X_2)(x_1Rx_2\Rightarrow f_1(x_1)=f_2(x_2)).$
\end{tabbing}
We also have that $x_1Rx_2$ iff $\Phi(x_2)=x_1$. From that we infer that $(\ast\ast)$ is equivalent with $f_1\cirk\Phi = f_2$.

Let ${\cal F}_{X_1,X_2}(R)$ be the set of all pairs $(f_1\!:X_1\rightarrow p,f_2\!:X_2\rightarrow p)$ such that $[f_1,f_2]\in{\cal F}^=(R)$. For $\Phi\!:X_2\rightarrow X_1$ as above, let $p^\Phi\!:p^{X_1}\rightarrow p^{X_2}$ be the function that maps a function $f_1\!:X_1\rightarrow p$ to the function $f_1\cirk\Phi\!:X_2\rightarrow p$. We have that
\begin{tabbing}
\hspace{8.5em}$(f_1,f_2)\in{\cal F}_{X_1,X_2}(R)$ \= iff $[f_1,f_2]\in{\cal F}^=(R)$,\\*[.5ex]
\> iff $f_1\cirk\Phi = f_2$,\\[.5ex]
\> iff $p^\Phi(f_1) = f_2$.
\end{tabbing}
So ${\cal F}_{X_1,X_2}(R)$ and $p^\Phi$ are the same function.

For every cartesian closed category ${\cal K}$ (see \cite[Section IV.10]{ML71} and \cite[Section I.3]{LS86}) and every object $C$ of ${\cal K}$ there is an exponential functor $C^-$ from ${\cal K}$ to ${\cal K}^{op}$, which assigns to an object $A$ of ${\cal K}$ the exponential object $C^A$ of ${\cal K}$, and to an arrow $f\!:A\rightarrow B$ of ${\cal K}$ the canonical arrow $C^f\!:C^B\rightarrow C^A$ produced by the cartesian closed structure of ${\cal K}$. (In the notation of \cite[Section I.1]{LS86} the arrow $C^f$ is $\varepsilon_{C,B}\cirk (\mj_{C^B} \times f))^*$.) The category \set\ is cartesian closed, and in it we have the following exponential functor $p^-$ from \set\ to $\set^{op}$, for $p$ an arbitrary set. The set $p^A$ is the set of all functions $h\!:A\rightarrow p$. For $f\!:A\rightarrow B$, the function $p^f\!:p^B\rightarrow p^A$ is defined by taking that for $g\!:B\rightarrow p$ we have
\[
p^f(g) = g\cirk f\!: A\rightarrow p.
\]
The function $p^\Phi$ above is a particular case of $p^f$.

Consider next the contravariant power-set functor of \set\ (see \cite[Section II.2]{ML71}). This is the functor $\overline{P}$ from \set\ to $\set^{op}$ such that $\overline{P}A$ is the power set of the set $A$, and for $f\!:A\rightarrow B$ the function $\overline{P} f\!:\overline{P}B \rightarrow \overline{P}A$ is the inverse-image function under $f$, which means that for $Y\in \overline{P}B$, i.e.\ $Y\subseteq B$, we have
\[
(\overline{P}f)(Y) = \{a\in A\mid f(a)\in Y\}.
\]
It is an easy exercise to verify that the functors $2^-$ and $\overline{P}$ are naturally isomorphic.

The image-function under $f$ and the inverse-image function under $f$ make a covariant Galois connection, i.e.\ a trivial adjunction (see \cite{D98} or \cite[Section 2.4.4]{D99}).

The functors $p^-$ and $\overline{P}$ can be restricted to the category \emph{Finset}, the full subcategory of \set\ whose objects are finite sets. The category $\seto$ is a skeleton of \emph{Finset}. The functor $p^-$ maps coproduct into product, and this fact is related to the arithmetical equation $p^{n+m} = p^n\cdot p^m$.

This paper gave a representation of product through coproduct in $\seto$. This worked because product is tied to functions, and functions in $\seto$ in general are representable through coproduct. This is not specific for functions tied to product. Any other functions would be representable through coproduct in $\seto$. For example, functions tied to exponentiation, which logically corresponds to implication. Since, as we noted at the end of the preceding section, coproduct can be represented through product in $\seto$, anything representable through coproduct can also be represented through product in $\seto$.

\end{document}